\def\L{\Lambda}
\def\cL{\mathcal{L}}
\DeclareMathOperator{\tr}{tr}
\begin{document}
 \title{Numerical methods for stochastic differential equations based
 	on Gaussian mixture\thanks{Received date, and accepted date (The correct dates will be entered by the editor).}}


          \author{Lei Li\thanks{School of Mathematical Sciences, Institute of Natural Sciences, MOE-LSC, Shanghai Jiao Tong University, Minhang District, Shanghai 200240, China (leili2010@sjtu.edu.cn).} \and Jianfeng Lu\thanks{Department of Mathematics, Department of Chemistry and Department of Physics, Duke University, Durham, NC 27708, USA (jianfeng@math.duke.edu).}
          	\and Jonathan C. Mattingly\thanks{Department of Mathematics and Department of Statistical Science, Duke University, Durham, NC 27708, USA (jonm@math.duke.edu).}
          	\and Lihan Wang\thanks{Department of Mathematics, Duke University, Durham, NC 27708, USA (lihan@math.duke.edu).}}

         \pagestyle{myheadings} \markboth{GAUSSIAN MIXTURE METHODS FOR SDEs}{L. Li, J. Lu, J. Mattingly and L. Wang} \maketitle

  \begin{abstract}
  We develop in this work a numerical method for stochastic differential equations (SDEs) with weak second-order accuracy based  on Gaussian mixture.  Unlike conventional higher order schemes for SDEs based on It\^o-Taylor expansion and iterated It\^o  integrals, the scheme  we propose approximates the probability measure $\mu(X^{n+1}\mid X^n=x_n)$ using a mixture of Gaussians. The solution at the next time step $X^{n+1}$ is drawn from the Gaussian mixture with complexity linear in dimension $d$. This provides a new strategy to construct efficient high weak order numerical schemes for SDEs. 
\end{abstract}
\begin{keywords}  Gaussian mixture; stochastic differential equation; second-order scheme; weak convergence
\end{keywords}

 \begin{AMS} 60H35; 65C30; 65L20
\end{AMS}
          \section{Introduction}Stochastic differential equations (SDEs) \cite{oksendal03} have been
          used to model a wide range of phenomena, such as stock prices of
          financial derivatives \cite{bs73,lambertonlaperyre2007}, and physical
          systems in contact with heat bath
          \cite{van1992,coffey2012,liliulu2017}.  SDEs have recently also
          been used for analyzing stochastic gradient descent (SGD) in machine
          learning \cite{lte17,llql17,feng2017}.  The SDEs are dynamical systems
          with noise \cite{oksendal03,weinan2019applied} that often represent
          interactions that are not included in the model but affect the
          dynamics. For example, in the Langevin equations
          	\cite{coffey2012,pavliotis2014stochastic}, one considers the
          	evolution of a subsystem, while the rest of the system, consisting of
          	potentially large degrees of freedom, is regarded as a heat bath. The
          	interaction between the heat bath and the subsystem is modelled by
          	noise and dissipation terms.
          
         We will consider general SDEs driven by white noise
          	\cite{pavliotis2014stochastic,weinan2019applied} and in particular
          	SDEs in It\^o sense \cite[Chap. 5]{oksendal03}:
          	\begin{equation}\label{eq:sde}
          		dX(t)=b(X(t))dt+\sigma(X(t))dW,~~X(0)=x,
          	\end{equation}
          	where $X\in \mathbb{R}^d$, $W$ is a standard $m$-dimensional
          	Brownian motion, $b: \mathbb{R}^d\to \mathbb{R}^d$ is the drift, and $\sigma: \mathbb{R}^d\to \mathbb{R}^{d\times m}$ is the diffusion matrix. We are interested in its numerical approximations, and depending on whether our goal is to approximate the sample paths or the
          	distributions, the numerical methods can be classified into strong
          	schemes and weak schemes \cite{kp92,weinan2019applied}. The weak schemes approximate the distributions, and we refer the readers to Section \ref{subsec:weakconv} for more details. Indeed, weak schemes attempt to match the moments of the iterated It\^o integrals, and therefore, the key question for designing weak schemes is how to approximate these moments efficiently.  The classical Euler-Maruyama scheme \eqref{eq:em} is known to be a weak first-order scheme. In applications, schemes with higher order accuracy are often desired. The weak second-order schemes, however, are not trivial for SDEs. The traditional second-order schemes, based on It\^o-Taylor expansion \cite{kp92,milstein2013stochastic}, involve evaluations of the spatial derivatives of drift and diffusion coefficients, as well as iterated It\^o integrals. The weak second-order schemes date back to Milstein and Talay \cite{mil1979method,talay1984}. The Talay-Tubaro expansion can also yield a weak second-order approximation \cite{talay1990expansion}. In
          \cite{rossler2006runge,rossler2009second}, Runge-Kutta methods that achieve arbitrary weak order for scalar noise and weak second-order for general noise are developed. Runge-Kutta schemes can avoid
          approximating some derivatives of the drift and the diffusion
          coefficients directly 
          (see for example the Talay-Tubaro scheme), and lead to better
          stability.  A weak trapezoidal second-order method has been developed
          in \cite{am11}, which is derivative free and no evaluation of iterated
          It\^o integrals is needed. However, it leverages the structure of a
          particular, but common, class of equations. In \cite{acvz12}, higher
          order convergence for a class of SDEs is achieved based on solving
          modified SDEs. In \cite{nv08,nn09}, another class of higher order
          schemes were developed which are often referred to as Lie splitting
          methods. Like the current methods, they strive for a level of weak
          accuracy by deriving a condition which guarantees that certain terms
          vanish in an expansion of the difference of the true density and that
          of the numerical method. 
          
          
          As will be discussed in Section \ref{subsec:weakconv}, one may approximate the conditional distribution $\mu(X^{n+1}\mid X^n=x_n)$ with asymptotic weak local error $O(h^3)$ to achieve global
          	weak second-order accuracy, where $\{X^n\}$'s are the numerical solutions and $h$ is the step size. In this work, we propose a novel Gaussian mixture method to achieve $O(h^3)$
          	weak local error in which $X^{n+1}$ can be sampled from one of a mixture of Gaussians.
          Our ansatz is inspired by the expansion of the solution using commutators.
          Our Gaussian particle ansatz has two attractive properties: \begin{itemize}
          	\item Since only one of the Gaussians in the mixture is chosen in each step, the cost in each step is
          	minimal and the simulation is fast: we only need to generate $d$
          	scalar random variables (with three possible values only) to generate
          	an initial point, and then generate a $d$-dimensional multi-variate normal
          	variable whose mean and covariance matrix are related with the $d$ scalar random variables and obtained by
          	solving an ODE (see Remark \ref{rmk:numberofrv} for some comments on the number of random variables needed). In this sense, the method we construct in this paper can be considered as a random mixture of Euler-Maruyama steps that produces a higher order method, and it is simple to implement. 
          	
          	
          	\item Secondly, our scheme does not need the spatial derivatives of the
          	coefficients, which is useful in several contexts.
          \end{itemize}  
          Numerical simulations show that our Gaussian mixture method is indeed weak second-order for reasonable
          values of the step size. This agrees with our theoretical results that our methods are asymptotically second-order as the step size goes
          to zero. For related works about using Gaussian approximations for general distributions, see \cite{lsw16,acos2007}. In \cite{acos2007}, Gaussian processes based on a variational approach are used to approximate posterior measure in path space. In \cite{lsw17}, Gaussian approximation is used to approximate transition paths in Langevin dynamics.
          
          This work is primarily interested with accuracy considerations. This
          is reflected in the use of essentially forward-Euler like steps to
          builds the Gaussian Mixture. It is easy to imagine replacing these
          with an implicit step to address stability concerns. 
          
          The rest of the paper is organized as follows. In Section~\ref{sec:setup}, we give a brief introduction to SDEs and the basic setup of our problem. In particular, the concept and criteria for weak accuracy using test functions with bounded derivatives are introduced. 
          In Section~\ref{sec:gauss}, we introduce the idea of Gaussian mixtures for high order weak accuracy and develop an algorithm for one-dimensional SDEs with weak second-order accuracy, where the mean and variance of the Gaussian are computed either based on some ODEs or construction. In Section~\ref{sec:multidim}, we generalize the algorithm for 1D SDEs to SDEs in multi-dimensions. The number of Gaussian beams are exponential in the dimension $d$, but we only need $d$ discrete random variables to determine which beam we choose, so the complexity is linear in $d$. In Section~\ref{sec:numeric}, we perform several numerical examples to see how our algorithm performs regarding different aspects.
          
          \section{Preliminaries}\label{sec:setup}
          
          In this section, we collect some definitions and notations related to SDEs. Moreover, the  notion of weak convergence is introduced in detail, which lays the foundation of our construction of Gaussian mixture methods in later sections.
          
          \subsection{Notations and assumptions.}\label{subsec:notations}

          For the convenience of later discussions, we introduce for any integer $k$ the
          following set of functions
          \begin{equation}
          	C_b^k=\Big\{f\in C^k: \|f\|_{C^k}:=\sup_{x\in \mathbb{R}^d} \sum_{|\alpha|\le k} |D^{\alpha}f|<\infty \Big\}.
          \end{equation}
          Here the subscript $b$ is used to remind the reader that the functions are
          bounded in value and all their derivatives up to the specified order. We use $\mathbb{E}_x$ to denote the expectation under the law of the process $X(t)$ with $X(0)=x$. The notation $\mathcal{N}(m, \L)$ denotes the normal distribution with mean $m$ and covariance matrix $\L$. 
          
          We list out the following assumptions, which will be used throughout this work:
          \smallskip
          
\begin{assumption}\label{ass:pd}
The diffusion matrix
\begin{equation}
\L(x) :=\sigma(x)\sigma^T(x)
\end{equation}
is uniformly positive definite. In other words, 
\[
\inf_{x\in \mathbb{R}^d} \min\lambda(\L(x))\ge \sigma_0^2>0
\]
for some $\sigma_0>0$, where $\lambda(\L)$ is the set of eigenvalues of $\L$.
\end{assumption}
          
\medskip
          
\begin{assumption}\label{ass:bounded}
          	The coefficients are smooth in the sense that $b, \sigma\in C_b^6$. 
\end{assumption}
\medskip
      
          Note that Assumption~\ref{ass:bounded} implies that the coefficients are Lipschitz continuous, i.e. there exists a constant $K>0$ such
          that for all $x, y\in \mathbb{R}^d$,  \begin{equation}
          	|b(x)-b(y)|+|\sigma(x)-\sigma(y)|\le K|x-y|.
          \end{equation}
          It is well known that Assumption~\ref{ass:bounded} ensures the
          existence of strong solutions to \eqref{eq:sde} \cite{oksendal03} and
          that the moments of the solution are bounded:
          \[
          \sup_{0\le t \le T}\mathbb{E}_x|X(t)|^{2m}\le C(x,T).
          \]
Though likely overly restrictive,  Assumption~\ref{ass:pd} and Assumption~\ref{ass:bounded} will simplify the analysis and make the ideas more transparent. Analysis based on Assumption~\ref{ass:bounded} has been pursued in many works (see for example \cite{am11}). Compared with Assumption~\ref{ass:bounded}, some authors relax the coefficients to have polynomial growth (see for example \cite{milstein86}). The current results can be extended to locally Lipschitz coefficients with polynomial growth  under appropriate one-sided Lyapunov conditions or simply the arbitrary moment bounds they imply, see for instance \cite{mattinglyStuartHigham2002}.

          The generator of the diffusion process \eqref{eq:sde} is given by 
          \begin{equation}\label{eqn:actualgenerator}
          	\cL=\sum_{i=1}^d b_i\partial_i+\sum_{1\le i, j\le d} \frac{1}{2}\L_{ij}\partial_{ij}.
          \end{equation}
          The evolution of the law satisfies the Fokker-Planck equation (or the forward Kolmogrov equation)
          \begin{equation}\label{eq:fp}
          	\partial_tp=\cL^*p:=-\nabla\cdot(b p)+\frac{1}{2}\sum_{ij}\partial_{ij}^2(\L_{ij} p).
          \end{equation}
          For a smooth function $\phi$, let us define
          \begin{equation}
          	u(x, t)=\mathbb{E}_x\phi(X(t)).
          \end{equation}
          With regularity Assumptions~\ref{ass:pd} and~\ref{ass:bounded},  $u$ satisfies the backward Kolmogorov equation (see, for example, \cite[Chap. 8]{oksendal03})
          \begin{equation}\label{eq:backwardequation}
          	u_t=\cL  u.
          \end{equation}
          Formally, this implies the semigroup expansion
          \begin{equation}\label{eq:semigroupexpansion}
          	u(x, t)=e^{t\cL}\phi(x)=\sum_{j=0}^{\infty}\frac{t^j}{j!}\cL^j\phi(x).
          \end{equation}
          Given regularity assumptions on $\phi$, the expansion can be rigorously established up to a certain order. We cite a classical result in \cite[Chap. XI]{hp96} for expansion up to $j=2$, which has been modified for our purpose.
          \medskip
          
          \begin{lemma}{\cite[Theorem 11.6.4]{hp96}}\label{lmm:groupexp} 
          	Under Assumptions~\ref{ass:pd}--\ref{ass:bounded}, 
          	there exists a non-negative non-decreasing function $\rho$, such
          	that for all $\phi\in C_b^{\infty}$,
          	\begin{equation}
          		\sup_{x\in\mathbb{R}^d}\Big|u(x, h)-\Big(\phi(x)+\sum_{j=1}^{2}\frac{h^j}{j!}\cL^j\phi \Big)\Big| \le \rho(\|\phi\|_{C^6})h^3.
          	\end{equation}
          \end{lemma}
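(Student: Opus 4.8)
The plan is to prove the estimate directly by iterating Dynkin's formula; this is the elementary route, and it is essentially equivalent to the semigroup argument of \cite{hp96}, since one can also obtain it by Taylor expanding $t\mapsto u(x,t)$ at $t=0$ with Lagrange remainder $\tfrac{h^3}{6}\partial_t^3 u(x,\theta h)=\tfrac{h^3}{6}\cL^3 u(x,\theta h)$. First I would record the regularity underlying everything. Since $b,\sigma\in C_b^6$, the coefficients $b_i$ and $\L_{ij}=(\sigma\sigma^T)_{ij}$ of the second-order operator $\cL$ belong to $C_b^6$ (by the Leibniz rule, $\|\L\|_{C^6}\le C\|\sigma\|_{C^6}^2$), so for $\phi\in C_b^\infty$ the functions $\cL\phi,\cL^2\phi,\cL^3\phi$ are classically defined, and a term-counting argument (each application of $\cL$ costs at most two derivatives of $\phi$ and, when it hits accumulated coefficients, at most two more derivatives of $b$ and $\L$) gives
\begin{gather}
\|\cL^j\phi\|_\infty\le C_j\bigl(\|b\|_{C^{2(j-1)}},\|\sigma\|_{C^{2(j-1)}}\bigr)\,\|\phi\|_{C^{2j}},\qquad j=1,2,3.
\end{gather}
In particular $\|\cL^3\phi\|_\infty$ is controlled by $\|b\|_{C^4}$, $\|\sigma\|_{C^4}$ and $\|\phi\|_{C^6}$, which is exactly why the $C^6$-norm appears in the statement; a byproduct is that $\phi,\cL\phi,\cL^2\phi\in C_b^2$ with bounded gradients.

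Next I would apply Dynkin's formula three times. For any $\psi\in C_b^2$, It\^o's formula along $X_t$ together with the fact that $\int_0^t\nabla\psi(X_s)^T\sigma(X_s)\,dW_s$ is a true martingale (its integrand is bounded, since $\nabla\psi$ is bounded and $\sigma$ is bounded by Assumption~\ref{ass:bounded}) yields $\mathbb{E}_x\psi(X_t)=\psi(x)+\mathbb{E}_x\int_0^t\cL\psi(X_s)\,ds$. Applying this to $\psi=\phi$, then to $\psi=\cL\phi$, then to $\psi=\cL^2\phi$, and using Fubini (the integrands are bounded), I obtain
\begin{align}
u(x,h)
&=\phi(x)+\mathbb{E}_x\int_0^h\cL\phi(X_{s_1})\,ds_1 \nonumber\\
&=\phi(x)+h\,\cL\phi(x)+\mathbb{E}_x\int_0^h\!\!\int_0^{s_1}\cL^2\phi(X_{s_2})\,ds_2\,ds_1 \nonumber\\
&=\phi(x)+h\,\cL\phi(x)+\frac{h^2}{2}\cL^2\phi(x)+\mathbb{E}_x\int_0^h\!\!\int_0^{s_1}\!\!\int_0^{s_2}\cL^3\phi(X_{s_3})\,ds_3\,ds_2\,ds_1.
\end{align}
The first three terms are precisely $\phi(x)+\sum_{j=1}^2\tfrac{h^j}{j!}\cL^j\phi(x)$.

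Finally I would bound the remainder. Since $|\cL^3\phi(X_{s_3})|\le\|\cL^3\phi\|_\infty$ pointwise and the simplex $\{0\le s_3\le s_2\le s_1\le h\}$ has volume $h^3/6$,
\begin{gather}
\sup_{x\in\mathbb{R}^d}\left|u(x,h)-\Bigl(\phi(x)+\sum_{j=1}^2\tfrac{h^j}{j!}\cL^j\phi\Bigr)\right|\le\frac{h^3}{6}\,\|\cL^3\phi\|_\infty\le\frac{h^3}{6}\,C_3\bigl(\|b\|_{C^4},\|\sigma\|_{C^4}\bigr)\,\|\phi\|_{C^6}.
\end{gather}
Taking $\rho(r):=\tfrac16 C_3(\|b\|_{C^4},\|\sigma\|_{C^4})\,r$, which is non-negative and non-decreasing (indeed linear), completes the argument. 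The only genuinely delicate point is the derivative bookkeeping in the first step: one must check that three compositions of the second-order operator $\cL$ require no more than six derivatives of $\phi$ and four derivatives of $b$ and $\sigma$, so that Assumption~\ref{ass:bounded} suffices for every manipulation above. I expect that bookkeeping to be the most error-prone part, though it is routine and presents no conceptual difficulty; everything else follows from boundedness of the coefficients and elementary stochastic calculus.
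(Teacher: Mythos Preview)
The paper does not actually prove this lemma: it is quoted as a classical result, with the citation \cite[Theorem~11.6.4]{hp96} (Hille--Phillips), and no argument is given in the text. Your iterated Dynkin/It\^o expansion is a correct and self-contained proof, and your derivative bookkeeping is right: three applications of the second-order operator $\cL$ consume at most six derivatives of $\phi$ and four of the coefficients, so Assumption~\ref{ass:bounded} is more than enough and the remainder bound $\tfrac{h^3}{6}\|\cL^3\phi\|_\infty$ yields the stated estimate with a linear $\rho$.

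The only methodological difference worth noting is that the cited reference obtains the same expansion abstractly, via the Taylor formula for the semigroup $e^{h\cL}$ on a suitable Banach space, which requires checking that $\phi$ lies in the domain of $\cL^3$; your probabilistic route bypasses that functional-analytic setup at the cost of verifying by hand that the stochastic integrals are martingales (which, as you observe, is immediate here since $\nabla\psi$ and $\sigma$ are bounded). For the purposes of this paper either route is adequate, and yours has the advantage of being explicit about exactly which norms of $b$, $\sigma$, and $\phi$ enter the constant.
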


          In a numerical scheme, we generate the approximation sequences for the diffusion process at discrete time steps. Let $T>0$ be the terminal time point, and $N$ the number of numerical steps such that
          \begin{equation}
          	h=T/N.
          \end{equation} 
          We use $t_n=nh$ ($n=0,1, \ldots$) to denote the time grid points, $X^{n}$ the random variable generated by some numerical method to approximate $X(t_n)$, and $x_n$ a particular realization of the random variable $X^n$.

          \subsection{Weak convergence.}\label{subsec:weakconv}
          We only require the law of $X^n$ to approximate the law of the solution to (\ref{eq:sde}). This is described by the notion of weak convergence, which will be the focus of this section.
          
          \medskip
          
          \begin{definition}\label{def:weakcvg}
          	Fix $T>0$. Let $N, h$ and $X^n$ be given as in Section~\ref{subsec:notations}. We say $X^n$ converges weakly with order $r>0$ to $X(t_n)$ as $h\to 0$ if for any $\phi\in C_{b}^{\infty}$, there exist $C>0$, $h_0>0$ that are independent of $h$ (but may depend on $T$ and $\phi$) such that
          	\begin{equation}
          		\bigl\lvert \mathbb{E}\phi(X^n)-\mathbb{E}\phi(X(t_n))\bigr\rvert\le C h^r, \quad \forall\, 1\le n\le N, \mbox{ whenever }h\in (0, h_0).
          	\end{equation}
          	Here, $\mathbb{E}$ represents the expectation under the law of $X^n$ or $X(t_n)$.
          \end{definition}
      \medskip

 \begin{remark}
Note that the test functions here have bounded derivatives, and those used in \cite[Sec. 9.7]{kp92} and \cite{milstein86} have derivatives with polynomial growth. Test functions with bounded derivatives induce weaker topology but are much easier to handle (see e.g., \cite{am11}). The results can be extended to the more general setting with additional  work and assumptions to ensure boundedness of moments.
\end{remark}
          
          \smallskip
          
          We now move on to the criteria of weak convergence. Suppose the random sequence $X^n$ is generated by 
          \begin{equation}\label{eq:scheme}
          	X^{n+1}=X^{n}+A(X^n, \zeta^n, h), \, X^0=x,
          \end{equation}
          where $\zeta^n$ is a random vector generated at time $t_n$ and $A$ is a function. If $\{\zeta^n\}$'s are i.i.d, then  $\{X^n\}$ is a time-homogeneous Markov chain. (For example, in Algorithm~\ref{alg:1dode} below, $\zeta^n$ is a combination of the $z$ random variable and the standard 1D normal variable $\xi$.)
          
The following proposition is standard and we provide the proof in Appendix~\ref{app:globalerr} for reference. We emphasize that the following two results are not new and are part of the standard ``folklore'' meta-theorems in the subject. We repeat them only to make precise the versions we need and their dependences on parameters.

\smallskip

\begin{proposition}\label{pro:globalerror}
Let $b$ and $\sigma$ satisfy $b, \sigma\in C_b^{2(r+1)}$ for $r>0$. If there is a nonnegative and non-decreasing function $\rho$ such that for all $\phi\in C_b^{\infty}$, we have the local truncation error bounded by 
\[
|\mathbb{E}_x\phi(X(h))-\mathbb{E}_x\phi(X^1)|\le \rho(\|\phi\|_{C^{2(r+1)}})  h^{r+1}, \, \forall x\in\mathbb{R}^d,
\]
then $X^n$ converges weakly with order $r$ to $X(t_n)$ as $h\to 0$.
\end{proposition}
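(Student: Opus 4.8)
The plan is to run the standard "local-to-global" telescoping argument for one-step schemes, quantifying every constant so that the $C_b$ regularity hypotheses on $b,\sigma$ are used exactly where needed. Set $u(x,t)=\mathbb{E}_x\phi(X_t)$ for a fixed $\phi\in C_b^\infty$, and for each $n$ consider the hybrid quantity obtained by running the exact semigroup for the first $n$ steps and the numerical chain for the remaining $N-n$ steps (or vice versa). The difference $\mathbb{E}\phi(X^N)-\mathbb{E}\phi(X(T))$ then telescopes into a sum of $N$ terms, each of which is the expectation of a one-step discrepancy $\mathbb{E}_{X^k}\phi(X(h))-\mathbb{E}_{X^k}\phi(X^1)$ evaluated along an intermediate point, composed with the exact backward evolution $u(\cdot, T-t^{k+1})$ acting as the test function.

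The key steps, in order: first, write $\mathbb{E}\phi(X^N)-\mathbb{E}\phi(X(T)) = \sum_{k=0}^{N-1}\bigl(\mathbb{E}[u(X^{k+1},T-t^{k+1})] - \mathbb{E}[(P^h u(\cdot,T-t^{k+1}))(X^k)]\bigr)$ up to boundary terms, where $P^h$ is the exact one-step Markov operator; after accounting for the fact that $u(\cdot,t)$ solves the backward equation, each summand equals $\mathbb{E}\bigl[g_k(X^k)\bigr]$ where $g_k(x) := \mathbb{E}_x\phi_k(X^1) - \mathbb{E}_x\phi_k(X(h))$ with $\phi_k := u(\cdot, T-t^{k+1})$. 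Second, apply the local truncation error hypothesis with test function $\phi_k$ to get $|g_k(x)|\le \rho(\|\phi_k\|_{C^{2(r+1)}}) h^{r+1}$ for every $x$. Third, bound $\|\phi_k\|_{C^{2(r+1)}}=\|u(\cdot,T-t^{k+1})\|_{C^{2(r+1)}}$ uniformly in $k$: here one invokes regularity of the backward Kolmogorov semigroup — because $b,\sigma\in C_b^{2(r+1)}$, the solution $u(\cdot,t)$ of \eqref{eq:backwardequation} stays in $C_b^{2(r+1)}$ with a norm bounded by $C(T)\|\phi\|_{C^{2(r+1)}}$ uniformly for $t\in[0,T]$ (a standard Schauder/energy estimate for the parabolic equation, or differentiating the SDE flow $2(r+1)$ times and using the moment and Lipschitz bounds implied by Assumption~\ref{ass:bounded}). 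Fourth, sum: $N$ terms each of size $C h^{r+1}$ give a total bounded by $N\cdot C h^{r+1} = (T/h) C h^{r+1} = CT h^{r}$, which is the claimed global order $r$; the same argument truncated at step $n\le N$ gives the bound for each $t^n$, and one takes $h_0$ arbitrary (say $h_0 = T$).

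The main obstacle — and the only place real work hides — is the third step: establishing the uniform-in-time $C_b^{2(r+1)}$ bound on $u(\cdot,t)$ with a constant depending only on $T$, $\|\phi\|_{C^{2(r+1)}}$, and the $C_b^{2(r+1)}$ norms of the coefficients. This is where Assumption~\ref{ass:bounded} (bounded coefficients and all derivatives up to order $2(r+1)$) is essential: one differentiates the flow map $x\mapsto X_t^x$ repeatedly, obtains linear SDEs for the derivative processes with coefficients controlled by the derivatives of $b,\sigma$, and propagates $L^p$ bounds by Grönwall, then commutes the spatial derivatives past the expectation. For the specific case $r=1$ needed in this paper one only needs $C_b^4$ control, and Lemma~\ref{lmm:groupexp} already packages a closely related estimate; the full proof in Appendix~\ref{app:globalerr} presumably spells this out. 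Everything else — the telescoping identity, the application of the local error hypothesis, and the final geometric-free summation over $N=T/h$ steps — is routine bookkeeping.
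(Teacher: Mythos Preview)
Your proposal is correct and follows essentially the same approach as the paper's proof in Appendix~\ref{app:globalerr}: both hinge on the uniform-in-time bound $\sup_{0\le t\le T}\|u(\cdot,t)\|_{C^{2(r+1)}}\le C(T)$ for the backward Kolmogorov solution, and then accumulate $N$ one-step local errors of size $\rho(\|u(\cdot,s)\|_{C^{2(r+1)}})h^{r+1}$. The only cosmetic difference is that the paper packages the accumulation as a sup-norm recursion $E^{n+1}\le E^n + Ch^{r+1}$ with $E^n=\sup_x|u^n(x)-u(x,nh)|$, whereas you write it as an explicit telescoping sum along the hybrid trajectories; these are the two standard presentations of the same argument.
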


\smallskip
          
As before, $\mathbb{E}_x$ represents the expectation under the law of the process or Markov chain starting at $x$. This proposition basically says that if the local truncation error is $O(h^{r+1})$, then the global error is of order $r$. We have the following trivial observation by Lemma~\ref{lmm:groupexp}
and Proposition \ref{pro:globalerror}:

\smallskip

\begin{corollary}\label{cor:second}
Under Assumptions~\ref{ass:pd}-\ref{ass:bounded}, if there exists $\rho$ that is nonnegative and non-decreasing such that $\forall \phi\in C_b^{\infty}$, we have
          	\begin{equation}\label{eq:basic2ndorder}
          		\sup_{x\in \mathbb{R}^d}\Bigl\lvert \mathbb{E}_x(\phi(X^1))-\sum_{j=0}^{2}\frac{h^j}{j!}\cL^j\phi(x)   \Bigr\rvert\le \rho(\|\phi\|_{C^6})h^{3},
          	\end{equation}
          	then the method \eqref{eq:scheme} is of weak second-order accuracy.
\end{corollary}

          \section{Weak second-order Gaussian mixture method}\label{sec:gauss}
          
          The Euler-Maruyama scheme \cite{kp92} for SDE \eqref{eq:sde}
          \begin{equation}\label{eq:em}
          	X^{n+1}=X^n+b(X^n)h+\sigma(X^n)\Delta W_n
          \end{equation} 
          generates Gaussian distributions for $X^{n+1}$ conditioning on $X^n=x_n$ but has only weak first-order accuracy. It is well known that constructing a weak second-order scheme is nontrivial, not to mention a weak second-order scheme using Gaussian approximations for the measure $\mu(X^{n+1}|X^n=x_n)$. In fact, as we will see, an approximation using a single Gaussian is generally insufficient for weak second-order accuracy. Hence, we aim to use Gaussian mixture to construct higher order schemes.

          To start with, let us recall that the law of  $Y(t)$, the weak solution of the following SDE with additive noise, is a Gaussian distribution if $Y(0)$ is a Gaussian random variable independent of $W$:
          \begin{equation}
          	dY(t)=\mu(t)dt+\sigma(t)dW.
          \end{equation} 
Here, $\mu(t)$ and $\sigma(t)$ only depend on time. The mean and covariance matrix of $Y(t)$ are given respectively by
          \begin{equation}
          	m(t)=m_0+\int_0^t\mu(s)\,ds,~~S(t)=S_0+\int_0^t\sigma(s)\sigma^T(s)\,ds.
          \end{equation}
          Conversely, if we are given some $m$ and some positive semidefinite matrix $S$, we can recover the normal distribution $\mathcal{N}(m, S)$ by constructing ``paths" $\{m(t)\}_{t\in [0,h]}$ and  $\{S(t)\}_{t\in [0,h]}$ with $m(h)=m,\, S(h)=S$, and $\dot{S}$ being positive semi-definite so that the solution to the SDE
          	\[
          	dY(t)=\dot{m}(t)\,dt+\sqrt{\dot{S}(t)}\,dW
          	\]
          	with $Y(0)\sim \mathcal{N}(m(0), S(0))$, independent of $W$, will satisfy $Y(h) \sim \mathcal{N}(m,S)$. Here $\dot{m}$ and $\dot{S}$
          	denote their respective time derivatives. Now let us consider the time-dependent generator 
          	\begin{equation}
          		\cL(s)= \dot{m}(s) \cdot \nabla_x + \dfrac{1}{2}\dot{S}_{ij}(s)\partial_{ij} .
          	\end{equation}
          	We now assume $S(0)=0$ and $m(0)=x_0$ for some $x_0$. By the backward equation \eqref{eq:backwardequation}, we have
          	\begin{equation}
          		\mathbb{E}\phi(Y(h))=\exp\Big(\int_0^h\cL(s)\,ds\Big)\phi(x_0)=\exp\Big((m-x_0)\nabla_x+\frac{1}{2}S_{ij}\partial_{ij}\Big)\phi(x_0).
          	\end{equation}
          	Here the second equality comes from integrating $\mathcal{L}$ in time. Therefore, for any random variable $Y\sim \mathcal{N}(m, S)$, we can express the expectation of $\phi$ as
	\begin{equation}\label{eq:keyidentity}
\mathbb{E}\phi (Y)=\exp(\cL_z)\varphi(z),~\forall z\in \mathbb{R}^d,
\end{equation}
where
          	\begin{equation}
          		\cL_z=(m-z)\partial_x+\frac{1}{2}S_{ij}\partial_{ij}.
          \end{equation}
          We will use \eqref{eq:keyidentity} to construct our scheme. In this section, we start with $d=1$. Our construction for $d=1$ will be used as the building block for constructing our scheme in higher dimensions in Section~\ref{sec:multidim}.
          
\subsection{Conditions for second order Gaussian mixtures.}
          
          First of all, we claim that using a single Gaussian distribution to approximate $\mu(X^{n+1}| X^n=x_n)$ is generally insufficient for weak second order accuracy. To start with, we assume $X^1$ generated by \eqref{eq:scheme} conditioning on $X^0=x_0$ is a normal distribution with mean $m(h,x_0)$ and variance $S(h,x_0)$:
          \begin{equation}\label{eq:markovchain}
          	X^1=x_0+A(x_0, \xi, h) \sim \tilde{\rho}(x; h, x_0)=\frac{1}{\sqrt{2\pi S(h,x_0)}}\exp\Big(-\frac{(x-m(h,x_0))^2}{2S(h,x_0)}\Big).
          \end{equation}
          Here $X^1\sim \tilde{\rho}$ means the law of $X^1$ has a density $\tilde{\rho}$. Using \eqref{eq:keyidentity}, we desire 
 \begin{equation}
          	\exp\Big((m(h,x_0)-x_0)\partial_x+\frac{1}{2}S(h,x_0)\partial_{xx}\Big)\varphi(x_0)
          	=\varphi(x_0)+h\cL\varphi(x_0)+\frac{h^2}{2}\cL^2\varphi(x_0)+O(h^3)
 \end{equation}
          in order to achieve global weak second-order accuracy. Clearly, we need $m(h,x_0)-x_0=o(1), S(h,x_0)=o(1)$ as $h\to 0$. Using the semigroup expansion (Lemma~\ref{lmm:groupexp}), we infer that 
          	\begin{align*}
          		m(h,x_0)&=x_0+m_1(x_0)h+\frac{1}{2}m_2(x_0)h^2+R_1(h,x_0) h^3,\\
          		S(h,x_0) &=S_0(x_0)h+\frac{1}{2}S_1(x_0)h^2+R_2(h,x_0)h^3,
          	\end{align*}
          where $R_1, R_2$ are bounded. Detailed calculation shows the following:
          
          \medskip
          \begin{proposition}\label{pro:firstorderofonegauss}
          	For a general multiplicative noise (or equivalently $\sigma(x)$ is not constant), there exist no $(m_0, m_1, m_2, S_0, S_1)$ as functions of $x_0$ such that the constraint \eqref{eq:basic2ndorder} can be satisfied. 
 \end{proposition}

\smallskip

          \begin{remark}
          	By the proof of Proposition \eqref{pro:firstorderofonegauss} in Appendix~\ref{app:firstgaussian}, if the noise is additive ($\sigma$ is independent of $x$), it is possible to construct an approximation with a single Gaussian that yields global weak second-order accuracy.
          \end{remark}
      
      \smallskip
          
The proof of Proposition \ref{pro:firstorderofonegauss} is provided in Appendix~\ref{app:firstgaussian}. Proposition \ref{pro:firstorderofonegauss} is a strong indication that no approximation with one Gaussian can reach weak second-order accuracy, which forces us to seek Gaussian mixtures. In the derivation below, we use $R(x)$ to denote a generic function with a bound that depends only on $\|\cdot\|_{C^6}$ norms of $b, \sigma$, and $\phi$ (the test function), and its concrete meaning may change from line to line. Below, we would first present an informal argument to derive the scheme; the rigorous analysis of the scheme will be deferred to later sections.

          As we have mentioned, considering the law of $X^1$ given the initial position  $X^0=X(0)=x_0$ is sufficient to determine the whole Markov chain by time homogeneity. Therefore, it suffices to consider that the law of $X^1$ is given by a mixture of $M$ Gaussians:
          \begin{equation}
          	X^1\sim \sum_{i=1}^M w_i \mathcal{N}(m_i(h), S_i(h)).
          \end{equation}
          Here we abuse notation by letting $\mathcal{N}(m, S)$ denote the
          density function of a Gaussian with the given mean and covariance. 
          
          Let $\cL_i:=(m_i(h)-x_0)\partial_x+\frac{1}{2}S_i(h)\partial_{xx}$. By \eqref{eq:keyidentity}, we have
          \begin{equation}
          	\mathbb{E}\phi(X^1)=\sum_{i=1}^M w_i \exp(\cL_i)\phi(x_0).
          \end{equation}
          Here, the dependence on $x_0$ in the coefficients is not written out explicitly for simplicity. Since after time $h$,   the scale for a pure
          diffusion process is $\sqrt{h}$ (since
          $\mathbb{E}|X(t+h)-X(t)|^2\sim h$), we expect that $|m_i(h)-x_0|\le C\sqrt{h}$ and $|S_i(h)|\le C_Mh$. Therefore, by Corollary~\ref{cor:second}, the scheme will be of weak second-order if the following holds for all $\phi\in C_b^{\infty}$:
          \begin{multline}\label{eq:2ndcondition1}
          	\sum_{i=1}^M w_i\Bigl(\cL_i\phi(x_0)+\frac{1}{2}\cL_i^2\phi(x_0)+\frac{1}{6}\cL_i^3\phi(x_0)+\frac{1}{24}\cL_i^4\phi(x_0)\Bigr)\\
          	-\Bigl(h\cL\phi(x_0)+\frac{1}{2}h^2\cL^2\phi(x_0)\Bigr)=R(x_0,h)h^3,
          \end{multline}
          where $R(h)$ is bounded and depends on the function $\phi$. We stop at $\cL_i^4$ because of the expectation $|m_i(h)-x_0|\le C\sqrt{h}$ so $\cL_i^k$ is of order at least $O(h^\frac{k}{2})$. Note that by \eqref{eqn:actualgenerator},
          	\begin{align}
          		\cL\phi(x_0)&=b(x_0)\phi'(x_0)+\frac{1}{2}\L(x_0)\phi''(x_0), \nonumber \\
          		\frac{1}{2}\cL^2\phi(x_0) & =\frac{1}{2}\Big(b(x_0)(b\phi'+\frac{1}{2}\L\phi'')'+\frac{1}{2}\L(x_0)(b\phi'+\frac{1}{2}\L \phi'')'' \Big).\label{eq:constraint1d}
          	\end{align}
          Due to the $\sqrt{h}$ scale in displacement, we take the following ansatz for $m_i(h)$ and $S_i(h)$:
          		\begin{align}
          			m_i(h)&=x_0+m_{i0}h^{1/2}+m_{i1} h+m_{i2} h^{3/2}+m_{i3} h^2+m_{i4} h^{5/2}+R_{i1}(h)h^3, \nonumber \\
          			S_i(h)&=S_{i1} h+S_{i2} h^{3/2}+S_{i3} h^2+S_{i4} h^{5/2}+R_{i2}(h)h^3 > 0.\label{eq:mSexpand} 
          		\end{align}
          	Substituting the ansatz \eqref{eq:mSexpand} into \eqref{eq:2ndcondition1},  after a tedious but straightforward calculation, we are able to derive the following conditions:
          		\begin{align}
          			&\sum_{i}w_i m_{i1}=b, \nonumber\\
          			&\frac{1}{2}\sum_{i}w_i S_{i1}+\frac{1}{2}\sum_i w_i m_{i0}^2=\frac{1}{2}\L, \nonumber \\
          			&\sum_i w_im_{i3}=\frac{1}{2}bb'+\frac{1}{4}\L b'', \nonumber \\
          			&\frac{1}{2}\sum_i w_i S_{i3}+\frac{1}{2}\sum_i w_i(2m_{i0}m_{i2}+m_{i1}^2)=\frac{1}{2}b^2+\frac{1}{4}b\L'+\frac{1}{2}\L b'+\frac{1}{8}\L\L'', \nonumber \\
          			&\frac{1}{2}\sum_i w_im_{i1}S_{i1}+\frac{1}{2}\sum_i w_i m_{i0}S_{i2}
          			+\frac{1}{2}\sum_i w_i m_{i0}^2m_{i1}
          			=\frac{1}{2}b\L+\frac{1}{4}\L\L', \nonumber \\
          			&\frac{1}{8}\sum_i w_iS_{i1}^2+\frac{1}{4}\sum_i w_i m_{i0}^2S_{i1}
          			+\frac{1}{24}\sum_i w_i m_{i0}^4=\frac{1}{8}\L^2, \nonumber \\ & \mbox{All the odd powers of } h^{1/2} \mbox{ vanish in }\sum_i w_i \cL_i^m \mbox{ for any } m=1,2,3,4. \label{eq:2ndCond2}
          		\end{align}
          	In the above equations, functions $b,\L$ and their spatial derivatives are all evaluated at point $x_0$.
          
          In the following Section \ref{subsec:1dode}, we consider a possible approach to satisfy these
          constraints, by choosing $M=3$.
          
          \medskip
          
          \begin{remark}
          	We have not yet derived a weak third order Gaussian
          	mixture scheme. The number of  variables and the equations
          	grow to the point where our current methods to solve them are
          	unfeasible. However, we expect that a minimum of five Gaussians is needed to reach third order, which is suggested by the second and sixth equations of (\ref{eq:2ndCond2}). These are constraints for $\phi''$ in first order and $\phi^{(4)}$ in second order respectively (and there will be another constraint for $\phi^{(6)}$ in third order), which only involve the weights $w_i$ and the leading order diffusion scaling terms $m_{i0}$ and $S_{i1}$.
          \end{remark}
          \subsection{An ODE approach.}\label{subsec:1dode}
          In this section, we give a particular construction of $m_i(h)$ and $S_i(h)$ that satisfy \eqref{eq:2ndCond2} which determines our numerical scheme. Our approach is to construct ODEs for $m_i$ and $S_i$ with a certain initial condition and solve them at time $h$, which has the advantage of avoiding derivative evaluations of $b$ and $\L$.
          
          To satisfy the last condition of \eqref{eq:2ndCond2}, we consider
          	a ``symmetric'' construction. It is convenient to relabel the
          	Gaussians as $i=0,\pm 1$, so that $\mathcal{N}(m_0(h),S_0(h))$ is
          	``centralized" and does not contribute to the odd powers of
          	$h^{\frac{1}{2}}$. The centers of the other two Gaussians
          	$m_{\pm 1}(h)$ are placed at both sides of $m_0(h)$ with
          	$O(\sqrt{h})$ distance apart, and the variance matrices
          	$S_{1}(h), S_{-1}(h)$ are constructed similarly with each
          	other. These two Gaussians will contribute powers like
          	$h^{k/2}$. Moreover, we impose $w_1=w_{-1}$, so that the odd powers of $h^\frac{1}{2}$
          	will cancel out with each other due to symmetry.
          
          For initial conditions, we set 
          		\begin{align}
          			& m_i(0)=x_0+z_i\sqrt{\gamma h\L(x_0)}, \, S_i(0)=0, \nonumber \\
          			& w_1=w_{-1}, ~~w_0+2w_1=1,\label{eq:initcondmi0}
          		\end{align}
          	where $\gamma>0$ is a parameter and 
          	\begin{equation}
          		z_i=i, ~i=0,\pm 1.
          	\end{equation}
          	This choice takes into consideration that the diffusion scale is $\sqrt{h}$, while transportation scale is $h$. For the choice of ODE flows, we take the following ansatz, where the functions $g_i$ are to be determined later:
          	\begin{subequations}\label{1dodeflow}
          		\begin{align}
          			& \dot{m}_i(t)=b(m_i(t)), \label{1dodemi}\\
          			& \dot{S}_i(t)=g_i(m_i(t)). \label{1dodeSi}
          		\end{align}
          	\end{subequations}
          	Our choice in \eqref{1dodemi} is natural in the sense that we expect 
          	\begin{equation*}
          		\dfrac{d}{dt} \mathbb{E} X(t) = \mathbb{E} b(X(t)) \approx b(\mathbb{E} X(t)),
          	\end{equation*} 
          	and the approximation is exact if $b$ is a linear function. For symmetry, we require $g_1=g_{-1}$. Clearly, the $\sqrt{h}$ factor enters $m_i(h)$ through the initial value and then the equation. Due to symmetries in both $m_{\pm 1}$ and $S_{\pm 1}$ and $w_1=w_{-1}$, all the odd powers of $h^{1/2}$ indeed cancel out in $\sum_i w_i \cL_i^m$.

          We now find the constraints on the functions $g_i$ and the parameters so that \eqref{eq:2ndCond2} can be satisfied.
          To start with, we have by Taylor expansion that
\begin{equation}\label{eq:expansionmh}
          	m_i(h)=m_i(0)+b(m_i(0))h+\frac{1}{2}b'(m_i(0))b(m_i(0))h^2+O(h^3).
\end{equation}
          	Hence, substituting \eqref{eq:initcondmi0} into \eqref{eq:expansionmh}, and considering our ansatz \eqref{eq:mSexpand}, we obtain
          	\begin{equation}\label{eqn:mi}
          		m_{i0}=z_i\sqrt{\gamma\L},~m_{i1}=b(x_0),~m_{i2}=z_ib'\sqrt{\gamma\L},~m_{i3}=\frac{1}{2}b''z_i^2\gamma\L+\frac{1}{2}bb'.
          	\end{equation}
          	Similarly, we can find $S_{ij}$'s:
          	\begin{equation}\label{eq:Sij}
          		S_{i1}=g_i(x_0), \, S_{i2}=z_ig_i'\sqrt{\gamma\L},~
          		S_{i3}=\frac{1}{2}z_i^2g_i''\gamma\L+\frac{1}{2}g_i'b.
          	\end{equation}
          	However, substituting \eqref{eqn:mi} and \eqref{eq:Sij} into \eqref{eq:2ndCond2} cannot uniquely determine the parameters $w_i,~g_i$ and $\gamma$. We further impose $S_{01}=S_{11}$ which makes our construction of the scheme easier for higher dimensions, which uniquely determines the solution:
          		\begin{align}
          			& \gamma=\frac{3}{2},~w_1=\frac{1}{6},~w_0=\frac{2}{3}, \nonumber \\
          			& g_0(x_0)=g_1(x_0)=\frac{1}{2}\L(x_0), \nonumber \\
          			& g_0'(x_0)=g_1'(x_0)=\L'(x_0), \nonumber \\
          			& g_1''(x_0)=\L''(x_0).\label{eq:theconstraints}
          		\end{align}
          	Clearly, choosing the following functions will suffice:
          	\begin{equation}\label{eq:funcg}
          		g_0(x)=g_1(x)=g(x):=\L(x)-\frac{1}{2}\L(x_0).
          \end{equation}
          Unfortunately, this choice has one issue: $g_0$ and $g_1$ are not always
          nonnegative. Indeed, it is possible that $S_i(h)$ given by \eqref{1dodeSi}
          could be negative. To solve this issue,
          we simply set $S_i(h)$ to zero if that happens. Fortunately, since $g(x) \approx \frac{1}{2}\L(x_0)$ is positive whenever $x$ is close to $x_0$, $S_i(h)$ can be guaranteed to be positive whenever $h$ is sufficiently small, thus it can be shown that this error has a
          lower-order effect. Similar situation also arises in \cite{am11}.

          The details of the procedure outlined above are
          expressed more exactly in the following Algorithm \ref{alg:1dode} which gives the
          pseudocode to generate $x_{n+1}$ from 
          $x_n$.
          
          \begin{algorithm}
          	\caption{Gaussian mixture scheme for SDEs (ODE method in 1D)\label{alg:1dode}}
          	\begin{algorithmic}[1]
          		\State Generate $z$ such that $P(z=0)=\frac{2}{3}$
          		and $P(z=1)=P(z=-1)=\frac{1}{6}$. Then, set
          		\begin{equation}
          			m(0)=x_n+z\sqrt{\frac{3}{2} \L(x_n) h}.
          		\end{equation}
          		
          		\State Solve the ODEs 
          			\begin{align}
          				& \dot{m}=b(m), \nonumber \\
          				& \dot{S}=g(m(t)),\label{eqn:1dode}
          			\end{align}
          		using an ODE solver of at least second-order accuracy (for example, Runga-Kutta methods of order $k\ge 2$) to obtain $m(h)$, $S(h)$. Here $g(x)=\L(x)-\frac{1}{2}\L(x_n)$.
          		
          		\State If $S(h)\le 0$, then $x_{n+1}=m(h)$.
          		If $S(h)>0$, then 
          		\begin{equation}
          			x_{n+1}=m(h)+\sqrt{S(h)}\xi,
          		\end{equation}
          		where $\xi$ is a standard 1D normal variable.
          	\end{algorithmic}
          \end{algorithm}
          
          \begin{remark}
          	One may truncate the function and  consider 
          	\[
          	g_0(x)=g_1(x)=\psi(x; x_0)(\L(x)-\L(x_0))+\frac{1}{2}\L(x_0)
          	\]
          	where $\psi(x; x_0)$ is some truncation function that is $1$ in a neighborhood of $x_0$ so that $g_0, g_1$ are positive definite for all $x$. This approach, however, is not very convenient and in practice the behavior is not very satisfactory. 
          \end{remark}
          
          We are now in position to present the following theorem, which tells that our scheme is indeed of weak second-order.
          
          \medskip
          
          \begin{theorem}\label{thm:1dodeflow}
          	Let $d=1$. Suppose Assumptions~\ref{ass:pd}-\ref{ass:bounded} hold., then Algorithm~\ref{alg:1dode} is a weak second-order scheme for SDE \eqref{eq:sde}.
          \end{theorem}
          \begin{proof}
          	It is clear that there exists $h_0>0$ such that for $h<h_0$,
          	\[
          	\sqrt{\tfrac{3}{2}\|\L\|_{\infty}h}+\|b\|_{\infty}h
          	<\frac{\sigma_0^2}{2\|\L'\|_{\infty}}.
          	\]
          	Consider that $X(0)=X^0=x_0$. By construction, $|m_i(t)-m_i(0)|\le \|b\|_{\infty}h$ for all $t\le h$, and we have
          	\[
          	g(m_i(t))\ge \frac{1}{2}\L(x_0)-\|\L'\|_{\infty}\Big(|m_i(t)-m_i(0)|+|z_i|\sqrt{\frac{3}{2} \|\L\|_{\infty} h} \Big)>0.
          	\]
          	Hence, $S_i(h)>0$ for $h<h_0$. Moreover, any reasonable numerical approximation to
          	$S_i(h)$ will also be positive for sufficiently small $h$.
          	
          	By \eqref{eq:funcg}, we can conclude that for $h<h_0$, \eqref{eq:2ndCond2} holds, and $S_i(h)>0$.
          	In other words, \eqref{eq:2ndcondition1} holds and 
          	\[
          	\Big|\mathbb{E}_{x_0}(\phi(X^{1}))-(\phi(x_0)+\cL\phi(x_0)+\frac{1}{2}\cL^2\phi(x_0))\Big|
          	\le \rho(\|\phi\|_{C^6})h^3.
          	\]
          	By Corollary~\ref{cor:second}, we find that our scheme constructed here is of weak second-order if \eqref{eqn:1dode} is solved exactly. 
          	Since for any numerical solver on \eqref{eqn:1dode} that is of at least second order, the error induced by solving \eqref{eqn:1dode} is $O(h^3)$ or smaller, and therefore the above local estimate still holds. Our Algorithm \ref{alg:1dode} is thus of weak second order as well.
          \end{proof}
      
      \medskip
          
          \begin{remark}
          	The above construction with ODE flow gives $S_i(h)$ that can be
          	possibly negative, though it is positive asymptotically as $h \to 0$
          	and when it becomes negative, we can always fix by setting it to
          	zero. One may desire to have a method that ensures $S_i(h)$ to be
          	positive. In Appendix \ref{app:varconstruct}, we provide a direct way to construct $S_i$'s so that positivity can be guaranteed. However, this method involves evaluation of the derivatives of $\Lambda$, which is oftentimes undesired.
          \end{remark}
          
          \section{Gaussian mixture for  multi-dimensions}\label{sec:multidim}
          
          In this section, we generalize the Gaussian mixture method
          constructed in Section~\ref{sec:gauss} to higher dimensions. We assume
          that we have the eigen-decomposition for $\L(x)$:
          \begin{equation}\label{eq:Lambdadecomp}
          	\L(x)=\sum_{i=1}^d \lambda_i(x) v_i(x)v_i^T(x),
          \end{equation}
          where $\lambda_i(x)$'s are the
          eigenvalues of the matrix $\L(x)$, and $\{v_i\}$ forms an orthonormal basis of $\mathbb{R}^d$.

          As discussed in Section~\ref{sec:gauss}, we only need to focus on how to generate $X^1$ given $X^0=x_0$. 
          Again, we assume that $X^1$ has the conditional probability measure of the form 
          \begin{equation}\label{eq:multdnormalass}
          	\bar{\rho}=\sum_{p\in P} w_p \mathcal{N}(m_p(h), S_p(h)),
          \end{equation}
          for Gaussian mixture approximations. Here we use $P$ to denote the set of indices $p$.
          
          To illustrate our choice of the number of Gaussians and their initial positions, suppose we have $d$-dimensional decoupled diffusion process (diffusion matrix is diagonal), then we approximate each dimension using our 1D technique in Section~\ref{sec:gauss} and then get a global second-order approximation. In each dimension, we have three Gaussians, which means we have a total of $3^d$ Gaussians. If the diffusion matrix is no longer diagonal, we can still consider using $3^d$ Gaussians. 
          At the first glance, the complexity is large, but fortunately, it turns out that the complexity grows linearly with $d$ instead of exponentially. 
          
          We now explain our construction. Let the index set $P=\{-1,0,1\}^d$, so that $|P|=3^d$, and each index $p\in P$ can be expressed as $p=(z_p^1,\cdots,z_p^d)$ where $z_p^i\in \{0,\pm 1\}$. Let us consider the Gaussians with initial centers $y_p$,  given by
          \begin{equation}
          	y_p=x_0+\sum_{i=1}^d z_p^i \sqrt{\gamma \lambda_i h} v_i , \, \mbox{ where } \gamma=\frac{3}{2}.
          \end{equation}
          These formulas and  $\gamma=\frac{3}{2}$ are obtained from the 1D construction in Section~\ref{sec:gauss}.  The weight for the Gaussian with index $p$ is 
          \begin{equation}\label{eq:weightsmd}
          	w_p=\prod_{i=1}^d w^{z_p^i},~~1\le p\le 3^d,
          \end{equation}
          with the parameters given by
          \begin{equation}
          	w^1=w^{-1}=\frac{1}{6},~w^0=\frac{2}{3}.
          \end{equation}
          
          \begin{remark}
          	Another natural idea is to place the initial points at $x_0, ~x_0\pm \sqrt{\gamma \lambda_i h}v_i$ and there are $2d+1$ such points. After some attempts, we found that this strategy hardly works when $d$ is large. 
          \end{remark}
      
      \smallskip
          
          With these initial positions and weights, we can easily generalize our Gaussian mixture constructions for $d=1$ to arbitrary dimensions. 
          
          \subsection{The ODE approach for multi-dimensions.}
          
          Following the construction in the 1D case, we consider $m_p(h)$ and $S_p(h)$ for $p\in P$ given by
          	\begin{align}
          		& \dot{m}_p(t)=b(m_p(t)), ~ m_p(0)=y_p, \nonumber \\
          		& \dot{S}_p(t)=G(m_p(t)),~ S_p(0)=0,\label{eq:mdodesys}
          	\end{align}
          where
          \begin{equation}\label{eq:Gx}
          	G(x)=\L(x)-\frac{1}{2}\L(x_0).
          \end{equation}
          Thanks to imposing $S_{01}=S_{11}$ in (\ref{eq:Sij}) we are able to have a simple expression (\ref{eq:Gx}). The algorithm can then be summarized as the following Algorithm~\ref{alg:multdode}.
          \begin{algorithm}[!hbp]
          	\caption{Gaussian mixture methods for SDEs (ODE method in higher D)\label{alg:multdode}}
          	\begin{algorithmic}[1]
          		\State Compute the matrix eigen-decomposition 
          		\begin{equation}
          			\L(x_n)=\sum_{i=1}^d \lambda_i v_iv_i^T.
          		\end{equation}
          		
          		\State Generate $z^i, i=1,2,\ldots, d$ so that $P(z^i=0)=\frac{2}{3}$ while $P(z^i=1)=P(z^i=-1)=\frac{1}{6}$.
          		
          		\State Let
          		\begin{equation}\label{eqn:highdm0}
          			m(0)=x_n+\sum_{i=1}^d z^i \sqrt{\frac{3}{2} \lambda_i h} v_i,
          		\end{equation}
          		and find $m(h)$ by solving
          		\[
          		\dot{m}(t)=b(m(t))
          		\]
          		using a method with at least second-order accuracy.
          		
          		\State
          		Find $S(h)$ by solving
          		\[
          		\dot{S}(t)=G(m(t)), ~S(0)=0.
          		\]
          		
          		\State If $S(h)$ is not positive definite, then set all negative eigenvalues to zero (keeping the same eigenvectors) and obtain $\tilde{S}(h)$. Sample $x_{n+1}\sim \mathcal{N}(m(h), \tilde{S}(h))$. In other words,
          		\[
          		x_{n+1}=m(h)+\sum_{i=1}^d \sqrt{\mu_i^+}\xi_i u_i
          		\]
          		where $S(h)=\sum_{i=1}^d \mu_i u_i u_i^T$ with $\{u_i\}_{i=1}^d$ being orthonormal, $\mu_i^+=\max(\mu_i, 0)$ and $\{\xi_i\}$ are i.i.d standard 1D normal variables.
          	\end{algorithmic}
          \end{algorithm}
      
      \smallskip
      
          \begin{remark}
          	Our algorithm requires a matrix factorization at every time step, which is the most computationally costly step. However, as $\L(X(t))$ does not change much between consecutive time steps, one could use the matrix of $v_i$'s as the preconditioner for next step's computation, which will significantly reduce the computational cost in high dimensions.
          \end{remark}
      \smallskip
      
          We now establish the main result for multi-dimensions:
          
          \smallskip
          
          \begin{theorem}\label{thm:odemultid}
          	Suppose the Assumptions~\ref{ass:pd}-\ref{ass:bounded} are satisfied,
          	then there exists $h_0>0$ such that when $h<h_0$:
          	
          	(i)  $S_p(h)$ is positive definite for all $p\in P$  and for any initial position $x_0$.
          	
          	(ii) for any test function $\phi\in C_b^{\infty}$, there exists a constant $C$ depending on the $C^6$ norms of $\phi, b, \sigma$ only, such that
          	\[
          	|\mathbb{E}_{x_0}(\phi(X^{1}))-\mathbb{E}_{x_0}(\phi(X(t_{1})))|\le C h^3,~h<h_0.
          	\]
          	Consequently, the Gaussian mixture Algorithm~\ref{alg:multdode} is a weak second-order scheme to \eqref{eq:sde}.
          \end{theorem}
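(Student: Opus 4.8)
The plan is to mirror the one-dimensional argument and reduce everything, via Corollary~\ref{cor:second}, to a local error estimate of order $h^3$. I would split the work into the two announced parts, establishing (i) first since (ii) relies on it.

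For part (i), the strategy is a direct perturbative bound on the ODE flow exactly as in the proof of the $d=1$ theorem. From $\dot m_p = b(m_p)$ with $\|b\|_\infty<\infty$ we get $|m_p(t)-y_p|\le \|b\|_\infty h$ for $t\le h$, while $|y_p-x_0|\le \sum_i \sqrt{\tfrac32 \lambda_i h}\,\le d\sqrt{\tfrac32\|\L\|_\infty h}$ by orthonormality of the $v_i$. Since $\dot S_p = G(m_p(t)) = \L(m_p(t))-\tfrac12\L(x_0)$, integrating gives $S_p(h) = \tfrac12\L(x_0)h + \int_0^h\bigl(\L(m_p(t))-\L(x_0)\bigr)\,dt$; using the Lipschitz bound on $\L$ (which follows from Assumption~\ref{ass:bounded}) the integral term is bounded in operator norm by $h\,\mathrm{Lip}(\L)\bigl(\|b\|_\infty h + d\sqrt{\tfrac32\|\L\|_\infty h}\bigr) = o(h)$. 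Combined with Assumption~\ref{ass:pd}, $\tfrac12\L(x_0)h \succeq \tfrac12\sigma_0^2 h\,I$, so for $h$ small enough (explicitly, $h_0$ chosen so the $o(h)$ term is below $\tfrac14\sigma_0^2 h$, which can be done uniformly in $x_0$ because all constants are uniform) $S_p(h)$ is positive definite, uniformly in $p$ and $x_0$. Since a reasonable second-order ODE solver approximates $S_p(h)$ to within $O(h^3)$, the numerically computed covariance is also positive definite for $h$ small, so the eigenvalue truncation in Step 5 is inactive and introduces no error in this regime.

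For part (ii), the key identity is \eqref{eq:keyidentity} applied to each beam: with $\cL_p := (m_p(h)-x_0)\cdot\nabla + \tfrac12 (S_p(h))_{ij}\partial_{ij}$ we have $\mathbb{E}_{x_0}\phi(X^1) = \sum_p w_p \exp(\cL_p)\phi(x_0)$. Since $|m_p(h)-x_0|=O(\sqrt h)$ and $S_p(h)=O(h)$, truncating $\exp(\cL_p)$ after the $\cL_p^4$ term incurs error $R(x_0,h)h^3$ with $R$ controlled by $\|\phi\|_{C^6}$ and the $C^6$ norms of $b,\sigma$ (here one uses boundedness of all quantities — Assumptions~\ref{ass:pd}--\ref{ass:bounded} — so the remainder in the operator Taylor expansion is genuinely uniform in $x_0$). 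It then remains to verify that the truncated sum equals $\phi(x_0)+h\cL\phi(x_0)+\tfrac12 h^2\cL^2\phi(x_0)$ up to $O(h^3)$. This is where the multi-dimensional analogue of \eqref{eq:2ndCond2} enters: I would Taylor-expand $m_p(h)$ and $S_p(h)$ in powers of $h^{1/2}$ using \eqref{eq:mdodesys} and \eqref{eq:Gx}, collect the coefficients $m_{p,k}$ and $S_{p,k}$ as in \eqref{eq:Sexpand}, and then check that the product weights \eqref{eq:weightsmd} with $\gamma_i=\tfrac32$ make all odd powers of $h^{1/2}$ cancel (Condition~\ref{cond:expan}(ii)) and make the even-power coefficients match $h\cL\phi+\tfrac12 h^2\cL^2\phi$. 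The cancellation of odd powers is automatic from the $z_p^i\mapsto -z_p^i$ symmetry and $w^1=w^{-1}$; the matching of the even terms is the content of the six equations \eqref{eq:2ndCond2}, which in the multi-dimensional case become a tensorized family of scalar identities along each eigendirection — these are exactly what the 1D choices $\gamma=\tfrac32$, $w_1=\tfrac16$, $w_0=\tfrac23$, $g(x)=\L(x)-\tfrac12\L(x_0)$ were engineered to satisfy, and the choice $S_{01}=S_{11}$ from \eqref{eq:Sij} is what permits the single formula \eqref{eq:Gx} to work simultaneously for all beams. Once this local bound $|\mathbb{E}_{x_0}\phi(X^1) - \sum_{j=0}^2 \tfrac{h^j}{j!}\cL^j\phi(x_0)|\le \rho(\|\phi\|_{C^6})h^3$ is in hand, Lemma~\ref{lmm:groupexp} converts it to the stated bound against $\mathbb{E}_{x_0}\phi(X(t^1))$, and Corollary~\ref{cor:second} (equivalently Proposition~\ref{pro:globalerror} with $r=2$) upgrades it to global weak second order.

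The main obstacle is the bookkeeping in the last step: verifying that the tensor-product construction \eqref{eq:weightsmd} genuinely reproduces the multi-dimensional moment conditions. In one dimension the six equations \eqref{eq:2ndCond2} are already "tedious but straightforward"; in $d$ dimensions one must confirm that no genuinely new cross-dimensional constraints appear beyond those forced by the decoupled case — i.e., that the mixed terms $\partial_i\partial_j$ with $i\ne j$ in $\cL^2\phi$ are correctly captured by the off-diagonal entries of $\L$ being transported along the flow through $G$. This works precisely because $G(x)=\L(x)-\tfrac12\L(x_0)$ is the full matrix (not a diagonal approximation) and because the leading covariance $\tfrac12\L(x_0)h$ already contains all the cross terms, but spelling this out requires care. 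A secondary, milder point is ensuring all the "generic function $R$" remainder bounds are uniform in the initial point $x_0$ — this is where Assumptions~\ref{ass:pd}--\ref{ass:bounded} (bounded coefficients with bounded derivatives, uniform ellipticity) are used, and it is routine but must be stated.
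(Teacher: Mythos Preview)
Your argument for (i) is essentially the paper's. For (ii) your plan is sound, but the paper organizes the computation differently and you omit its central device. Rather than expanding $\exp(\cL_p)\phi$ at $x_0$ and then verifying a multi-dimensional analogue of \eqref{eq:2ndCond2}, the paper expands each Gaussian expectation about its own center $m_p(h)$ (so only $\phi$, $\partial_{ij}\phi\,S_{p,ij}$ and $\partial_{ijkl}\phi\,S_{p,ij}S_{p,kl}$ appear, all evaluated at $m_p(h)$), Taylor-expands along the ODE flow from $m_p(h)$ back to $y_p=m_p(0)$, and then invokes Lemma~\ref{lmm:usefullmm}---proved separately by induction on $d$---to evaluate every weighted sum $\sum_p w_p F(y_p)$ in terms of derivatives at $x_0$. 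That lemma is precisely what resolves the ``main obstacle'' you flag: it packages the moment factorization $\sum_p w_p \prod_i (z_p^i)^{\alpha_i}=\prod_i \mu_{\alpha_i}$ coming from the product weights \eqref{eq:weightsmd}, and this is what makes the cross-directional terms $D_i^2D_j^2\phi$ and the mixed pieces of $\tfrac12\cL^2\phi$ (e.g.\ $\L_{kl}\partial_k\L_{ij}\partial_{ijl}\phi$) line up after the $3^d$-fold sum.

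Your route---expand $m_p(h)-x_0$ and $S_p(h)$ as polynomials in the $z_p^i$, substitute into $\sum_k \cL_p^k\phi(x_0)/k!$, and compute $z$-moments directly---would also close and is closer in spirit to the $d=1$ proof. What it buys is that all derivatives of $\phi$ sit at $x_0$ from the start, so no separate ``sum over $y_p$'' lemma is needed. But your description of the even-power matching as ``a tensorized family of scalar identities along each eigendirection'' is the weak point: the $h^2$ coefficient is not a product of 1D identities, and you would still need the moment factorization above (equivalently, the content of Lemma~\ref{lmm:usefullmm}) stated explicitly to handle the off-diagonal and mixed-derivative contributions. The paper's approach isolates that combinatorics into a standalone lemma; yours leaves it embedded in the final bookkeeping.
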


      \smallskip
          
          To prove this theorem, we first present a useful lemma, the proof of which is deferred to Appendix~\ref{app:proofusefullmm}:
          
          \smallskip
          \begin{lemma}\label{lmm:usefullmm}
          	For a function $\phi\in C_b^{\infty}$, we have
          	\begin{align*}
          		\sum_{p\in P}w_p \phi(y_p)
          		& =\phi(x_0)+\sum_{i=1}^d w^1 D_i^2\phi(x_0) \gamma \lambda_i h 
          		+\frac{1}{2}\sum_{i\neq j} (w^1)^2\gamma^2D_i^2D_j^2\phi(x_0)  \lambda_i\lambda_j h^2
          		\\ & \qquad +\frac{1}{12}\sum_{i=1}^d (w^1\gamma^2) D_i^4\phi(x_0) \lambda_i^2h^2
          		+R(h)h^3.
          	\end{align*}
          	Here we use shorthand notation $D_i:=D_{v_i}$.
          \end{lemma}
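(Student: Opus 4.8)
The plan is to Taylor-expand each $\phi(y_p)$ about $x_0$ in the displacement $\delta_p := y_p-x_0 = \sum_{i=1}^{d} z_p^i\sqrt{\gamma_i\lambda_i h}\,v_i$ and then exploit the product structure of the weights $w_p=\prod_{i=1}^{d}w^{z_p^i}$ when summing over $p$. Since $\L=\sigma\sigma^T$ is bounded by Assumption~\ref{ass:bounded}, its eigenvalues $\lambda_i(x_0)$ are bounded uniformly, so $|\delta_p|\le \sum_i\sqrt{\gamma_i\|\L\|_\infty h}=O(h^{1/2})$ uniformly in $x_0$. As $\phi\in C_b^\infty\subset C_b^6$, Taylor's theorem with Lagrange remainder gives $\phi(y_p)=\sum_{k=0}^{5}\frac{1}{k!}(\delta_p\cdot\nabla)^k\phi(x_0)+O(|\delta_p|^6)$, where the remainder is bounded by $C\|\phi\|_{C^6}\|\L\|_\infty^3 h^3$; multiplying by $w_p$, summing, and using $\sum_p w_p=1$, the contribution of these remainders is of the form $R(h)h^3$.

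I would then expand $(\delta_p\cdot\nabla)^k\phi(x_0)=\sum_{i_1,\dots,i_k=1}^{d}z_p^{i_1}\cdots z_p^{i_k}\sqrt{\gamma_{i_1}\lambda_{i_1}h}\cdots\sqrt{\gamma_{i_k}\lambda_{i_k}h}\,D_{i_1}\cdots D_{i_k}\phi(x_0)$ with $D_i=v_i\cdot\nabla$, and sum over $p$, which ranges over all of $\{0,\pm 1\}^d$. Because the weight factorizes, a multi-index $(i_1,\dots,i_k)$ in which index $i$ occurs with multiplicity $m_i$ contributes a factor $\sum_p w_p\prod_i (z_p^i)^{m_i}=\prod_i\mu_{m_i}$, where $\mu_m:=\sum_{z\in\{0,\pm 1\}}w^z z^m$. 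A one-line computation gives $\mu_0=1$, $\mu_1=\mu_3=\mu_5=0$ (odd powers vanish by $w^1=w^{-1}$), and $\mu_2=\mu_4=2w^1=\tfrac13$. Hence only multi-indices in which every index appears an even number of times survive, and for $k\le 5$ this forces $k\in\{0,2,4\}$: $k=0$ gives $\phi(x_0)$; $k=2$ forces $(i,i)$, contributing $\tfrac{1}{2}(2w^1)\gamma_i\lambda_i h\,D_i^2\phi(x_0)$ summed over $i$; and $k=4$ splits into the case of one index appearing four times, contributing $\tfrac{1}{24}(2w^1)\gamma_i^2\lambda_i^2 h^2 D_i^4\phi(x_0)$ summed over $i$, and the case of two distinct indices each appearing twice, which occurs in $\binom{4}{2}=6$ orderings and contributes $\tfrac{1}{24}\cdot 6\cdot(2w^1)^2\gamma_i\gamma_j\lambda_i\lambda_j h^2 D_i^2D_j^2\phi(x_0)$ summed over unordered pairs.

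Collecting terms and simplifying the numerical factors, $\tfrac12(2w^1)=w^1$, $\tfrac{1}{24}(2w^1)=\tfrac{w^1}{12}$, $\tfrac{6}{24}(2w^1)^2=(w^1)^2$, and rewriting the sum over unordered pairs as $\tfrac12\sum_{i\neq j}$, yields exactly the claimed identity with the remaining $O(h^3)$ error absorbed into $R(h)h^3$. The argument is elementary; the only points requiring care are the combinatorial bookkeeping of which multi-indices survive the $p$-summation and the count of orderings in the $k=4$ cross term, together with verifying that the Taylor remainder is uniform in $x_0$ — which is immediate from the uniform bound on $|\delta_p|$ and the $C_b^6$ bound on $\phi$. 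I do not anticipate any genuine analytic obstacle.
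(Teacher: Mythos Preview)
Your argument is correct. The Taylor expansion to fifth order is legitimate because the directional derivatives $D_i=v_i(x_0)\cdot\nabla$ have \emph{frozen} direction vectors and hence commute, so the multinomial bookkeeping you describe goes through; the factorization $\sum_p w_p\prod_i(z_p^i)^{m_i}=\prod_i\mu_{m_i}$ is exactly the point, and your moment values $\mu_0=1$, $\mu_2=\mu_4=2w^1$, $\mu_{\text{odd}}=0$ are right. The remainder is uniform in $x_0$ because $\lambda_i(x_0)\le\|\L\|_\infty$ under Assumption~\ref{ass:bounded}.

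The paper takes a different route: rather than expanding all $d$ directions at once and doing the combinatorics, it proceeds by induction on the dimension. One writes $T_{m+1}(\phi)$ as a weighted sum over the $(m{+}1)$-st coordinate of $T_m$ applied to shifted functions, Taylor-expands in that single coordinate, and invokes the inductive hypothesis for $T_m$. This avoids the enumeration of admissible multi-indices in the $k=4$ term at the price of a two-step argument. Your direct approach is more transparent about \emph{why} the result holds --- it makes the product-measure structure explicit and immediately shows that any odd-order term vanishes --- and would generalize more readily if one ever wanted to push the expansion to higher order in $h$. The inductive proof, on the other hand, keeps each step one-dimensional and so sidesteps the multinomial counting. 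Both are equally rigorous here.
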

      
      \smallskip
          
          \begin{remark}
          	Here $D_{v_i}\phi(x):=v_i(x_0)\cdot\nabla\phi(x)$, so we have $D_{v_i}^2\phi=
          	v_i\cdot\nabla(v_i\cdot\nabla\phi(x))=v_i\otimes v_i:\nabla^2\phi(x)$. The function inside is $v_i(x_0)\cdot\nabla\phi(x)$. In other words, we allow $\phi$ to change for $x\neq x_0$ but $v_i$ is frozen to be its value at $x_0$.
          \end{remark}
      
      \smallskip
          
          \begin{proof}(\textbf{Proof of Theorem~\ref{thm:odemultid}})
          	
          	(i). To prove this claim, we find that for all $p \in P$,
          	\begin{equation}\label{eqn:mphdiffoh}
          		|m_p(t)-m_p(0)|\le \|b\|_{\infty}h, ~\forall t\in [0, h].
          	\end{equation}
          	Hence, 
          	\begin{align*}
          		\min \lambda(G(m_p(t))) & \ge  \frac{1}{2}\min \lambda(\L(x_0))-\|\L(m_p(t))-\L(x_0)\|_2 \\
          		& \ge \frac{1}{2}\min \lambda(\L(x_0))-\sup_{x\in\mathbb{R}^d}\|\L'\|_{2}\Big(|m_p(t)-m_p(0)|+\sqrt{\frac{3}{2}\max_{1\le i\le d}\lambda_i h}\Big).
          	\end{align*}
          	Recall that we use $\lambda(M)$ to represent the set of eigenvalues of matrix $M$.  If $h$ is sufficiently small, $\min \lambda(G(m_p(t))) $ is positive for all $p\in P$ for $t\le h$.
          	By Equation \eqref{eq:mdodesys}, $\min \lambda(S_p(h))$ is positive for all $p$.
          	
          	(ii). Noticing that $\partial_{ijkl}\phi$ is a symmetric tensor on any indices, we find (the Einstein summation convention is used)
          	\begin{align*}
          		\mathbb{E}_{x_0}(\phi(X^{1})) & =\sum_{p\in P}w_p\Big( \phi+\frac{1}{2}\partial_i\partial_j\phi S_{p,ij}(h)
          		+\frac{1}{8}\partial_{ijkl}\phi S_{p,ij}S_{p,kl} \Big)\Big|_{x=m_p(h)}+R(h)h^3\\
          		& =\sum_{p\in P}w_p\Big[\phi+b_i\partial_i\phi h
          		+\frac{1}{2}(b_i\phi_{ij}b_j+b_i(\partial_ib_j)\partial_j\phi) h^2
          		+\frac{1}{2}\partial_{ij}\phi G_{ij}h\\ & \qquad
          		+\frac{1}{4}\Big(2\partial_{ijk}\phi b_k G_{ij}+\partial_{ij}\phi\partial_k G_{ij}b_k\Big)h^2
          		+\frac{1}{8}\partial_{ijkl}\phi G_{ij}G_{kl}h^2\Big]\Big|_{x=m_p(0)} \\ & \qquad +R(h)h^3.
          	\end{align*}
          	Using Lemma~\ref{lmm:usefullmm}, we are able to compute the sums. For example, we find:
          	\begin{align*}
          		\sum_{p\in P} w_p \frac{1}{2}\partial_{ij}\phi G_{ij}h \Big|_{x=m_p(0)}
          		& =\frac{1}{4}h \lambda_m D_m^2\phi
          		+\frac{h^2}{4}\sum_{m\neq n} w^1 D_m^2D_n^2\phi \gamma\lambda_m \lambda_n 
          		\\ & \qquad +\frac{1}{2}h^2 w^1 \Big(D_m^2\L_{ij}  \partial_{ij}\phi+2D_m\L_{ij} D_m\partial_{ij}\phi\Big)  \gamma \lambda_m \\ & \qquad +\frac{1}{4}h^2 w^1 D_m^4\phi \gamma \lambda_m^2 +R(h)h^3.
          	\end{align*}
          	Here, we used \eqref{eq:Gx} and identities like
          	\[
          D_m^2\partial_{ij}\phi \frac{1}{2}\L_{ij}=\frac{1}{2}
          	D_m^2D_n^2\phi \lambda_n.
          	\]
          	Noting 
          	\[
          	\gamma=3/2,~w^1\gamma=1/4,
          	\]
          	we have after some computation:
          	\begin{equation}
          		\mathbb{E}_{x_0}(\phi(X^{1}))=\phi(x_0)+Ah+Bh^2+R(h)h^3,
          	\end{equation}
          	where
          	\[
          	A=b_i\partial_i\phi+\frac{1}{2}  \lambda_m D_m^2\phi,
          	\]
          	and
          	\begin{align*}
          		B & =\frac{1}{8}\sum_{m\neq n}  D_m^2D_n^2\phi \lambda_m\lambda_n 
          		+\frac{1}{8} D_{m}^4\phi \lambda_m^2+\frac{1}{4}D_m^2(b_i\partial_i\phi) \lambda_m
          		\\ & \qquad +\frac{1}{2}(b_i\partial_{ij}\phi b_j+b_i\partial_ib_j \partial_j\phi)  
          		+\frac{1}{8} (D_m^2\L_{ij} \partial_{ij}\phi+2D_m\L_{ij} D_m\partial_{ij}\phi) \lambda_m
          		\\ & \qquad+\frac{1}{4}d b_k\partial_k D_m^2\phi \lambda_m   +\frac{1}{4}\partial_{ij}\phi b_k\partial_k\L_{ij}. 
          	\end{align*}
          	Again, by the eigen-decomposition $\L=\sum_{i=1}^d \lambda_i v_i v_i^T$, we find
          	\begin{equation}
          		\cL\phi(x_0)=b_i\partial_i \phi+\frac{1}{2}\L_{ij}\partial_{ij}\phi |_{x=x_0}=A.
          	\end{equation}
          	Similarly, we find
          	\begin{align*}
          		\frac{1}{2}\cL^2\phi(x_0)& =\frac{1}{2}\big(b_k\partial_kb_i \partial_i\phi+b_kb_i\partial_{ik}\phi
          		+\frac{1}{2}b_k\partial_k\L_{ij}\partial_{ij}\phi+\frac{1}{2}b_k\L_{ij}\partial_{ijk}\phi \big)\\ & \qquad
          		+\frac{1}{4}\L_{kl}\big(\partial_k\partial_l b_i \partial_i\phi
          		+2\partial_kb_i\partial_{il}\phi+b_i\partial_{ikl}\phi
          		+\frac{1}{2}\partial_k\partial_l\L_{ij}\partial_{ij}\phi
          	\\ & \qquad 	+\partial_k\L_{ij}\partial_{ijl}\phi
          		+\frac{1}{2}\L_{ij}\partial_{ijkl}\phi \big),
          	\end{align*}
          	which equals to $B$. Together with (i), Corollary~\ref{cor:second} gives the claim.
          \end{proof}
      
      \smallskip
      
          \begin{remark}\label{rmk:numberofrv}
          	For the multi-dimensional Algorithm~\ref{alg:multdode}, though we have exponentially many Gaussians, the complexity is just linear in $d$. In fact, one needs the number of random variables to grow at least linearly in $d$ to get a weak second-order scheme for general SDEs \cite{mil1979method,talay1984,milstein86}. 
          \end{remark}

          \subsection{Efficiency of the Monte Carlo method.}
          
          For the multi-dimensional Algorithm~\ref{alg:multdode}, though we have exponentially many Gaussians, we see that the complexity is just linear in $d$, which means our algorithm has good computational efficiency.
          Since we only care about the distributions, we often use Monte Carlo methods
          \cite{metropolis1949,rubinstein2016} to generate a large number of samples and use the empirical measure to approximate the probability measure.  As we know, the error and efficiency of Monte Carlo methods depend on the variance. The variance of the Euler-Maruyama scheme \eqref{eq:em} is $\L(x_n)h$, where $x_n$ is the value of the scheme at $t_n$.
          For the same reason, if we can show that the variance of Algorithm~\ref{alg:multdode} after one step is proportional to $h$, then the Monte Carlo method based on our algorithm is as efficient as the Monte Carlo method based on the Euler-Maruyama method \eqref{eq:em}.
          
          In this section, we compute the second moment
          \begin{equation}
          	M_2:=\mathbb{E}\left(|X^{n+1}-x_n|^2 \mid X^n =x_n\right)
          \end{equation}
          and show that it is indeed $O(h)$ despite we have exponentially many Gaussians. For the notational convenience, we define the matrix norm 
\begin{equation}
          		||\L||_{\tr}=\sup_{x\in\mathbb{R}^d} \tr(|\L(x)|),
\end{equation} 
where $|\L(x)|=\sum_{i=1}^d |\lambda_i(x)| v_i(x)v_i^T(x)$ if $\L(x)$ is given by \eqref{eq:Lambdadecomp}. 

\smallskip
          
\begin{proposition}
There exists $h_0>0$ such that when $h<h_0$
\[
M_2\le 3\|\L\|_{\tr}h,
\] for Algorithm~\ref{alg:multdode}. 
 \end{proposition}

\smallskip
          
          \begin{proof} 
          	By \eqref{eq:multdnormalass}, direct computation shows that 
          	\begin{align} \label{eq:M2calculation}
          		M_2 &= \int_{\mathbb{R}} ||x-x_n||^2\bar{\rho}(x)dx 
          		=\sum_{p\in P} w_p \big(\tr(S_p(h)) +\|m_p(h)-x_n\|^2  \big) \\
          		& \le  \sum_{p\in P} w_p\tr(S_p(h))+2h^2\|b\|_{\infty}^2+ 2\sum_{p\in P} w_p \big\|\sum_{i=1}^dz_p^i \sqrt{\frac{3}{2} \lambda_i h}v_i\big\|^2\\
          		& =  \sum_{p\in P} w_p\tr(S_p(h))+2h^2\|b\|_{\infty}^2+\tr(\L(x_n))h.
          	\end{align}
          	The first inequality in \eqref{eq:M2calculation} follows from \eqref{eqn:highdm0} and \eqref{eqn:mphdiffoh}:
          	\begin{align*}
          		|m_p(h)-x_n|^2=|m_p(h)-m_p(0)+m_p(0)-x_n|^2
          		& \le 2(|m_p(h)-m_p(0)|^2+|m_p(0)-x_n|^2) \\ &\le 2h^2\|b\|_\infty^2 + 2\Big\|\sum_{i=1}^d z_p^i \sqrt{\frac{3}{2}\lambda_i h}v_i\Big\|^2.       	\end{align*}
          	For the last equality, we have by the fact that $\{v_i\}$'s are orthonormal:
          	\[
          	\sum_{p\in P} w_p \Big\|\sum_{i=1}^dz_p^i \sqrt{\frac{3}{2} \lambda_i h}v_i\Big\|^2
          	=\frac{3h}{2}\sum_{p\in P} \sum_{i=1}^dw_p |z_p^i|^2 \lambda_i,
          	\]
          	and the last equality in \eqref{eq:M2calculation} follows since $\sum_{p\in P} w_p |z_p^i|^2=2w_1=\frac{1}{3}$ (see \eqref{eq:auxiliaryidentityapp}). Now, noticing $\tr(G(m(t))) \le \frac{3}{2} \|\L\|_{\tr}$, we obtain
          	\[
          	\tr(S_p(h))\le \frac{3}{2}h\|\L\|_{\tr},~p\in P.
          	\]
          	For Algorithm~\ref{alg:multdode}, when $h$ is small enough, we have
          	\[
          	\tr(S_p(h))\le h \|\L\|_{\tr} .
          	\]
          	Since $\|b\|_{\infty}^2h^2$ is in higher order, the claim follows. 
          \end{proof}

          \section{Numerical experiments}\label{sec:numeric}
          
          In this section, we apply the algorithm on SDE \eqref{eq:sde} in It\^o sense with different choices of $b$ and $\sigma$. Note that the Assumption~\ref{ass:bounded} $\sigma, b\in C_b^m$ is only listed for convenience of theoretical analysis. For a diffusion process starting at $x_0$, within finite time $T$, the probability density is concentrated in a finite domain and the far away behaviors of $b$ and $\sigma$ are not important. Hence, in the simulation here, we may use unbounded $b$ and $\sigma$.  We also check how the algorithm behaves 
          if there are some degenerate points of $\L$ (i.e. $\L$ is only positive semi-definite at these points).

          \subsection{A 1D example with regular $\sigma$.}\label{subsec:example1}
          
          This example is designed to test the correctness of Algorithm~\ref{alg:1dode}. 
          The dimension is $d=1$ and $\sigma^2$ is uniformly bounded from below. We will also plot the empirical distribution generated by our algorithm to compare with the one generated by Euler-Maruyama scheme \eqref{eq:em}. 
          
          The SDE we consider is as following:
          \begin{equation}\label{eq:ex1}
          	dX(t)=\lambda X(t)dt +\sqrt{X(t)^2+4}\, dW .
          \end{equation}
          The diffusion coefficient $\sigma(x)=\sqrt{x^2+4}$ is bounded below uniformly so that there is no degenerate point.
          
          To test the correctness of our algorithm, we use the test function $\phi(x)=x^2$ and define the relative error as
          \begin{equation}\label{eq:relativeE}
          	E=\frac{1}{\mathbb{E}X^2(T)}\Big|\frac{1}{N}\sum_{k=1}^N (X^{(k), [T/h]})^2-\mathbb{E}X^2(T) \Big|,
          \end{equation}
          where $X^{(k)}=\{X^{(k), n}\}_{n\ge 0}$ is the sequence generated by the numerical algorithm in the $k$-th experiment. Hence, $X^{(k)}$ is a sample path. The exact expectation $\mathbb{E}_{x_0}X^2(T)$, by It\^o's formula  \cite[Chap. 4]{oksendal03}, is given by 
          \[
          \mathbb{E}_{x_0}X^2(T)=x_0^2 \exp((2\lambda+1)T)+4\frac{\exp((2\lambda+1)T)-1}{2\lambda+1}.
          \]

          In Figure~\ref{fig:1dex1}, we plot the results of the simulation for $X(0)=2, \lambda=-2$ and $T=2$. Each error is computed using $N=10^8$ trajectories. The ``error bars" are obtained by chopping all samples into $10$ slices, with each slice containing $10^7$ trajectories. We then compute the relative error \eqref{eq:relativeE} in each slice, denoted by $E^{(m)}$ ($1\le m\le 10$). We find the standard deviation $\sigma_E$ for the data $\{E^{(m)}\}_{m=1}^{10}$, and use $[E-1.65\sigma_E, E+1.65\sigma_E]$ as our confidence interval. We find that 
          our Gaussian mixture method gives weak second-order accuracy.
          \begin{figure}[ht]
          	\begin{center}
          		\includegraphics[width=0.5\textwidth]{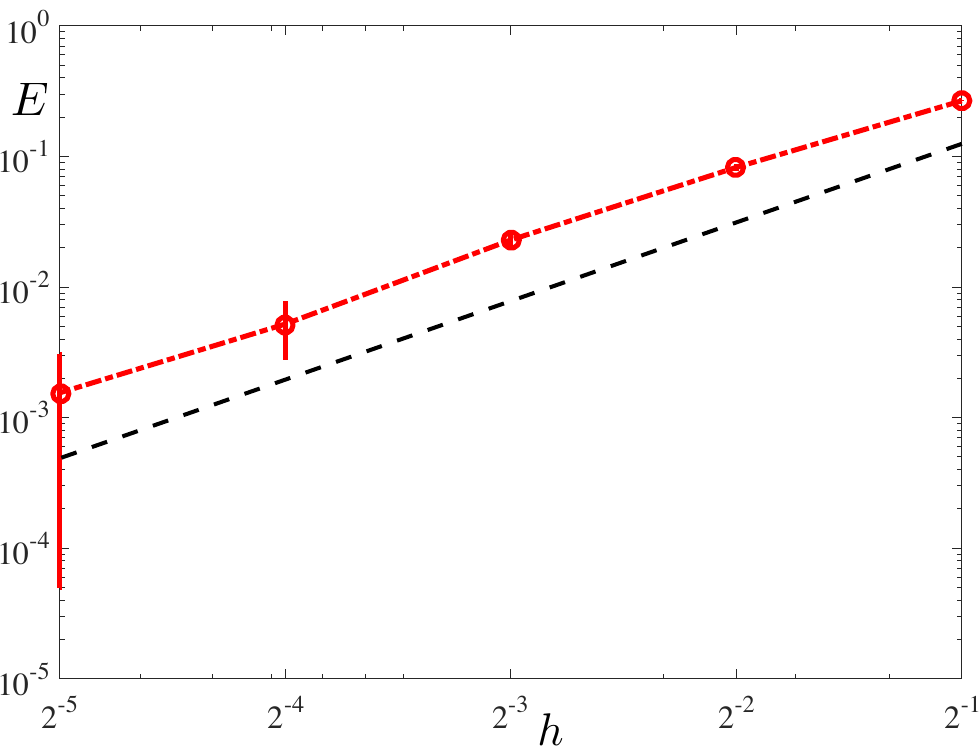}
          	\end{center}
          	\caption{$X(0)=2$, $\lambda=-2$, $T=2$. We plot the errors obtained by the Gaussian mixture method
          		The vertical short segments are the ``error bars'' and the black dashed line indicates $E=h^2$.}
          	\label{fig:1dex1}
          \end{figure}

          To confirm that the Gaussian mixture method gives the desired distribution, we now plot the empirical distribution in Figure~\ref{fig:hist} by histcounts.  All the empirical densities are obtained by using $N=10^6$ points, and the initial condition $X(0)=2$. We take the results obtained from Euler-Maruyama (E-M) scheme \eqref{eq:em} with $\Delta t=h^3$ as the reference density (green curves in Figure~\ref{fig:hist} (a) and (b)). 
          
          In Figure \ref{fig:hist} (a), we plot the empirical densities obtained by Algorithm~\ref{alg:1dode} (red) 
          and Euler-Maruyama (black) after one step with step size $\Delta t=h=1/32$. At time $t=h$, the reference density (green curve) has a peak at $x_c\approx 1.79$ while its mean is located at the black dot ($\bar{x}\approx 1.88$). We also calculated the empirical skewness $\gamma_1 = \mathbb{E}\Big(\dfrac{X(h)-\bar{x}}{\sigma}\Big)^3\approx 0.3695$ (here only $\sigma$ denotes the variance of the reference density), and the kurtosis $K=\mathbb{E}\Big(\dfrac{X(h)-\bar{x}}{\sigma}\Big)^4 \approx 3.3078 $, while the accurate skewness and kurtosis are 0.3718 and 3.3153 respectively. The skewness and kurtosis for a Gaussian (Euler-Maruyama method) are 0 and 3 respectively. For Algorithm~\ref{alg:1dode}, these two numbers are 0.3717 and 3.1888.
          
          In Figure \ref{fig:hist} (b), we plot the empirical densities obtained by Algorithm~\ref{alg:1dode} (red) 
          and Euler-Maruyama (black) at time $t=1$ with step size $\Delta t=h=1/32$. We find that the densities given by our weak second-order algorithm almost coincides with the reference density, while the one given by E-M is worse.
          \begin{figure}[ht]
          	\begin{center}
          		\includegraphics[width=0.5\textwidth]{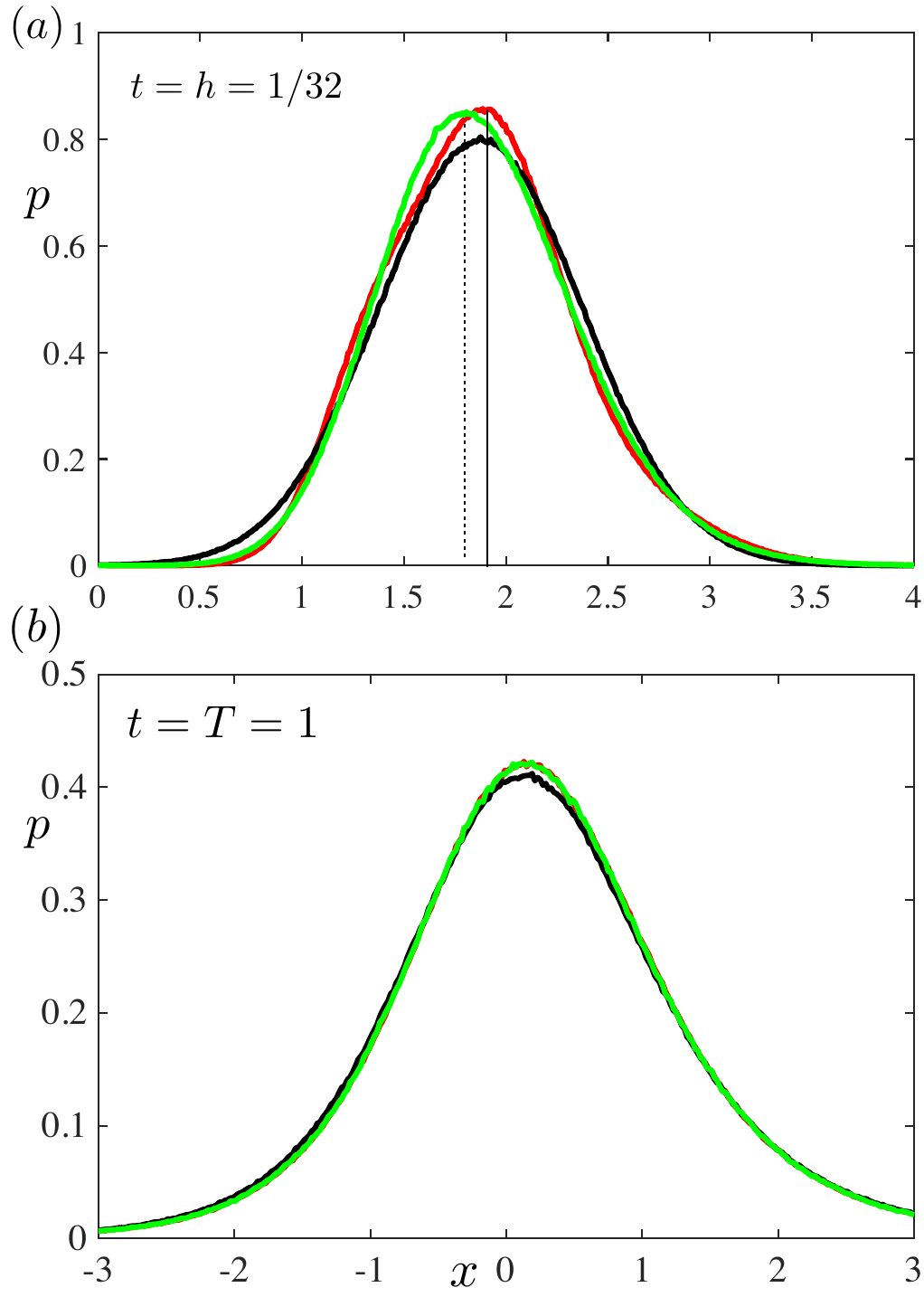}
          	\end{center}
          	\caption{The green lines are the `accurate densities' obtained using E-M with step size $\Delta t=h^3$. Other curves are empirical distributions (black dashed line: E-M;  red broken line: Gaussian mixture) 
          		obtained with step size $\Delta t=h$. (a). Empirical distributions after one step. The solid vertical line shows the mean of the green curve while the dashed line shows the peak. Empirical skewness is 0.3695 and kurtosis is 3.3078. (b). Empirical distributions are at $t=1$.}
          	\label{fig:hist}
          \end{figure}
          
          To sum up, for this example \eqref{eq:ex1}, the Gaussian mixture method has weak second-order accuracy and is able to capture the correct distribution better. 

\subsection{1D Geometric Brownian Motion.}
In this example, we consider the 1D Geometric Brownian Motion 
\[
dX(t)=\lambda X(t)\,dt+\sigma X(t)\,dW,
\]
which has a degenerate diffusion coefficient 
\[
\sigma(x)=\sigma x.
\]
          
          Again, we test the weak accuracy with test function $\phi(x)=x^2$ and define the weak error
          \[
          E=\frac{1}{\mathbb{E}X^2(T)}\Big|\frac{1}{N}\sum_{k=1}^N (X^{(k), [T/h]})^2-\mathbb{E}X^2(T)\Big|.
          \]
          By It\^o calculus, it is straightforward to find
          \[
          \mathbb{E}X^2(T)=x_0^2\exp((2\lambda+\sigma^2)T).
          \]
          In Figure~\ref{fig:gbm1d}, we plot the weak error of simulations for $\lambda=-0.8, \sigma=0.85, x_0=5, T=1$ with $N=2\times 10^8$.  The error bars are computed by slicing the samples into $5$ pieces of equal size, and the method is the same as in Section~\ref{subsec:example1} (confidence interval is $[E-1.65\sigma_E, E+1.65\sigma_E]$).  
          
          For the tested parameters our Gaussian mixture method 
          still works and is of weak second-order.  For this example, 
          the error of Algorithm~\ref{alg:1dode} scales like $h^2$ only when $h$ becomes small.
          This can be seen in the kink in Figure~\ref{fig:gbm1d} where only the left-most
          two points seem to line up with the order $h^2$ line. After further
          investigation, we find that for the first three $h$ values ($h=0.25,
          0.125, 0.0833$), there are roughly $1/6$ chance that the computed
          $S(h)$ from the ODE is negative. For smaller values of $h$ ($h=0.0625,
          0.05$), $S(h)$ is always nonnegative for the samples we have.
          In light of this, we
          expect the second-order behavior for our approach to appear in the
          examples with $h\lesssim 0.0625$. When there is a resonable chance
          that  $\sigma^2$ is degenerate, our approach seems to lose the second-order accuracy.

          \begin{figure}[htbp]
          	\begin{center}
          		\includegraphics[width=0.6\textwidth]{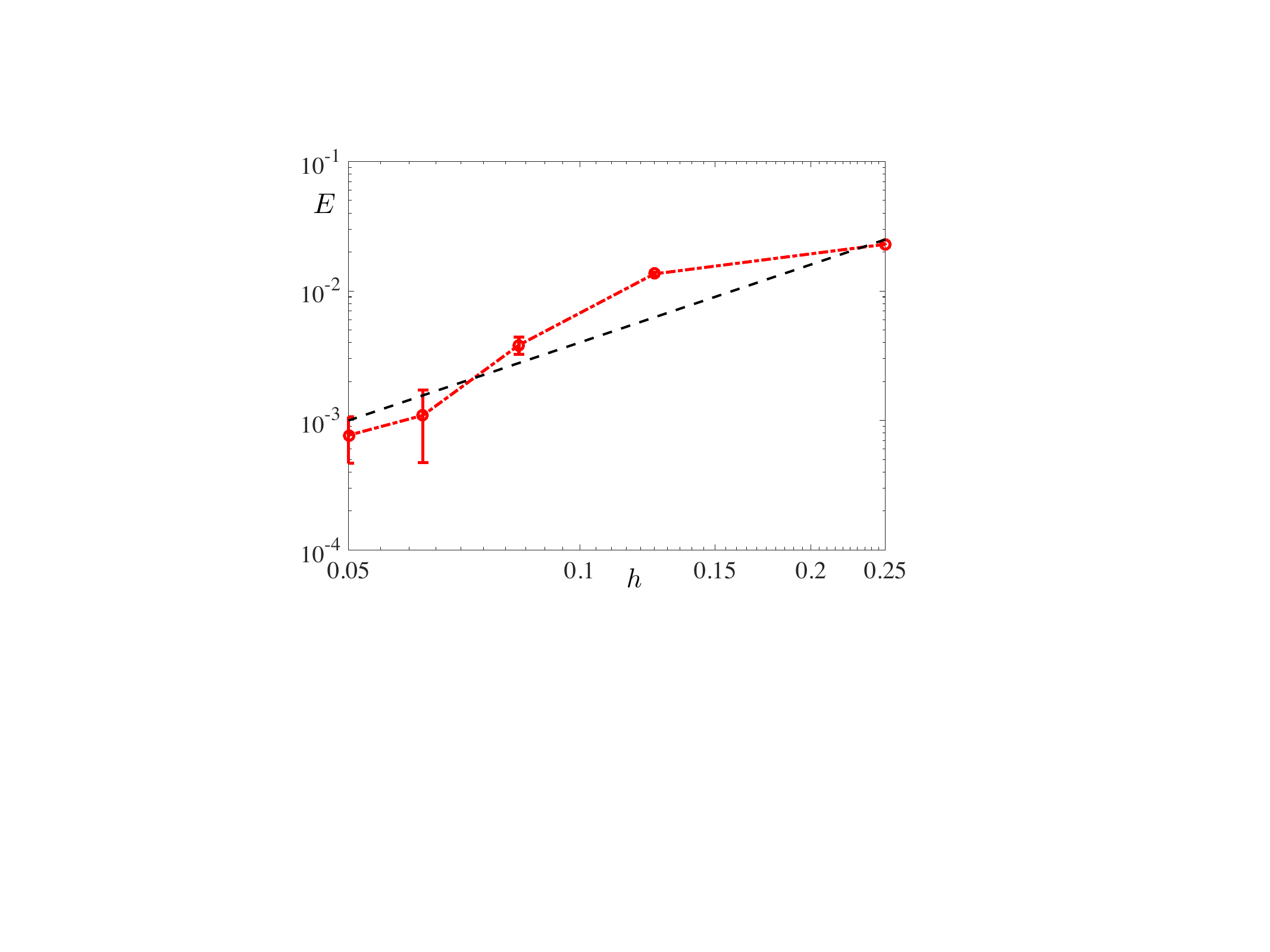}
          	\end{center}
          	\caption{Log-Log error plot for geometric Brownian motion, with $X(0)=5, \lambda=-0.8, \sigma=0.85 , T=1$. The red line with circles is the error obtained by the Gauss mixture method. 
          		The black dashed line is $E=0.4h^2$ and vertical bars represent the ``error bars''.}
          	\label{fig:gbm1d}
          \end{figure}

          \subsection{A 2D example.}
          
          In this example, we consider a 2D SDE, which is a modification of the first example in \cite{am11}:
          \begin{equation}\label{eq:2d1}
          	\begin{pmatrix}
          		dX_1(t) \\ dX_2(t)
          	\end{pmatrix}=\begin{pmatrix}
          		X_1(t) \\ -X_2(t)
          	\end{pmatrix} + X_1(t) \begin{pmatrix}
          		0 \\ 1
          	\end{pmatrix}dW_1(t) 
          	+\sigma
          	\begin{pmatrix}
          		1 \\ 1
          	\end{pmatrix}
          	dW_2(t) ,
          \end{equation}
          where $W_1(t)$ and $W_2(t)$ are independent standard Brownian motions, and $\sigma$ is a positive constant. The purpose here is to show that our Gaussian mixture method for multi-dimensions (Algorithm~\ref{alg:multdode}) 
          works for $\L(x)$ that has varying eigen-directions. 
          
          We consider the solution of \eqref{eq:2d1} at $T=1$ with initial condition $X_1(0)=X_2(0)=1$ and $\sigma=0.1$. We will use the test function $\phi(x)=x_2^2$ to check the weak accuracy.  By It\^o's formula, 
          \[
          \mathbb{E}X_2^2(t)=e^{-2t}\big(\mathbb{E}X_2^2(0)-\frac{1}{4}\mathbb{E}X_1^2(0)-\frac{3\sigma^2}{8}\big)
          +\frac{\sigma^2}{4}+e^{2t}\big(\frac{1}{4}\mathbb{E}X_1^2(0)+\frac{\sigma^2}{8}\big).
          \]
          As before, 
          the relative error is computed as 
          \[
          E=\frac{1}{\mathbb{E}X_2^2(T)}\Big|\frac{1}{N}\sum_{k=1}^N (X_2^{(k), [T/h]})^2-\mathbb{E}X_2^2(T) \Big|.
          \] 
          
          In Figure~\ref{fig:2D1}, we sketch the error plots with $N=2\times 10^8$ and also slice these samples into $10$ equal pieces for the ``error bar'' calculation (confidence interval $[E-1.65\sigma_E, E+1.65\sigma_E]$ and $\sigma_E$ is the standard deviation for these 10 data). We find that our Gaussian mixture method gives weak second-order accuracy for this 2D example as well. 
          \begin{figure}[htbp]
          	\begin{center}
          		\includegraphics[width=0.6\textwidth]{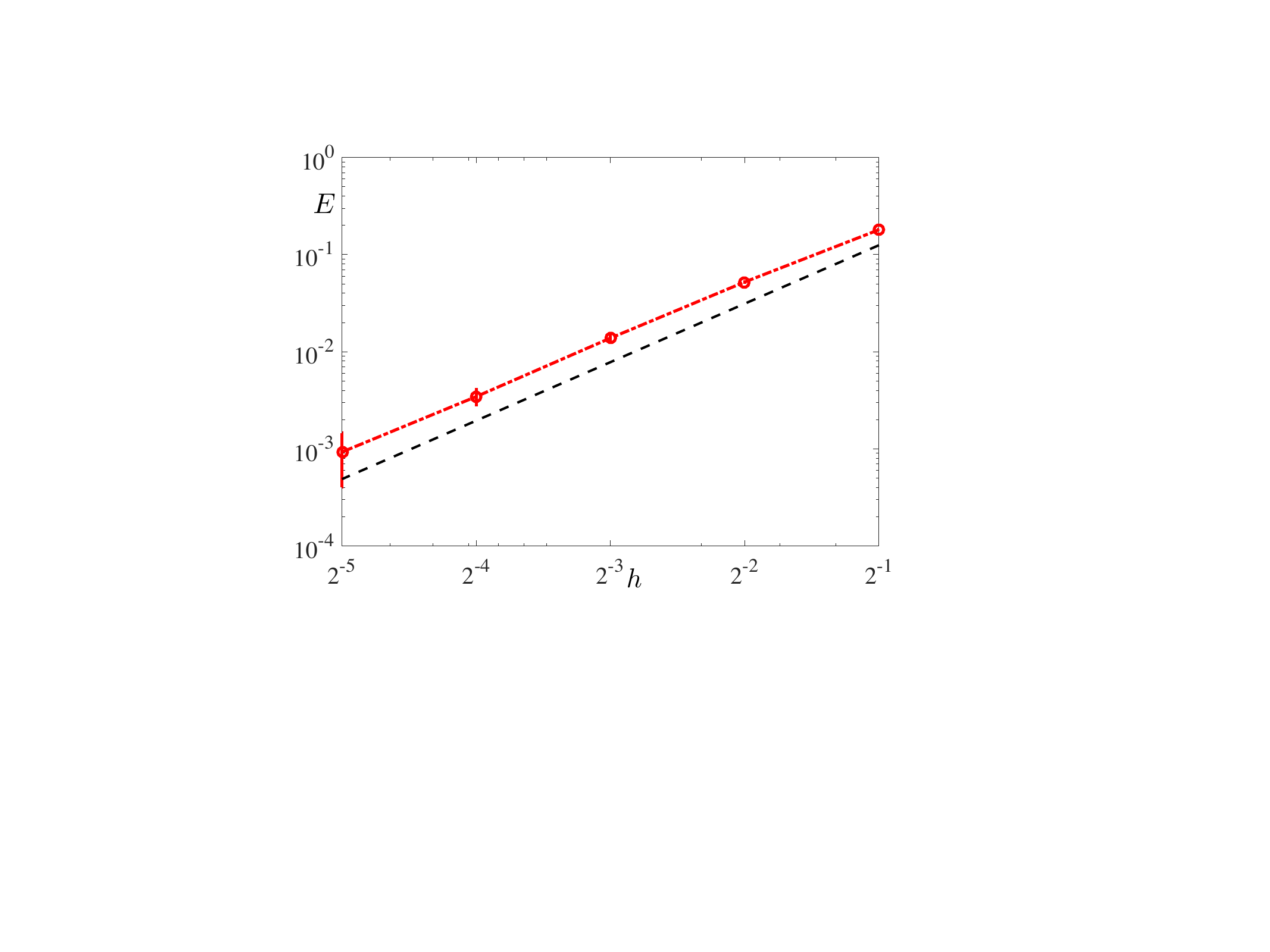}
          	\end{center}
          	\caption{Log-log error plot of the Gaussian mixture method for the 2D example with $\sigma=0.1$ (red line). 
          		The black line is $E=0.5h^2$ while the vertical segments are the ``error bars''. }
          	\label{fig:2D1}
          \end{figure}

          
          
          \subsection{A 6D Example.}

          According to Algorithm~\ref{alg:multdode}
          , the proposed Gaussian mixture method depends explicitly on the dimension and one is surely curious with what will happen if the dimension gets higher. In this example, we look at a 6D problem and verify that our algorithm is still weak second-order.
          
          The SDE we consider is given by:
          \begin{multline}
          	d\begin{pmatrix}
          		X_1 \\ X_2 \\ X_3 \\ X_4 \\ X_5 \\ X_6
          	\end{pmatrix}=\begin{pmatrix}
          		-1 & 1 & 0 & 0 & 0 & -1 \\ -1 & -1 & 1 & 0 & 0 & 0 \\ 0 & -1 & -1 & 1 & 0 & 0 \\ 0 & 0 & -1 & -1 & 1 & 0 \\ 0 & 0 & 0 & -1 & -1 & 1 \\ 1 & 0 & 0 & 0 & -1 & -1
          	\end{pmatrix}\begin{pmatrix}
          		X_1 \\ X_2 \\ X_3 \\ X_4 \\ X_5 \\ X_6
          	\end{pmatrix}dt +\\ 
          	\sigma \begin{pmatrix}
          		\sqrt{0.1+X_1^2} & -0.1 & 0 & 0 & 0 & -0.1 \\ -0.1 & \sqrt{0.2+X_2^2} & -0.1 & 0 & 0 & 0 \\ 0 & -0.1 & \sqrt{0.3+X_3^2} & -0.1 & 0 & 0 \\ 0 & 0 & -0.1 & \sqrt{0.4+X_4^2} & -0.1 & 0 \\ 0 & 0 & 0 & -0.1 & \sqrt{0.5+X_5^2} & -0.1 \\ -0.1 & 0 & 0 & 0 & -0.1 & \sqrt{0.6+X_6^2}
          	\end{pmatrix}d\begin{pmatrix}
          		W_1 \\ W_2 \\ W_3 \\ W_4 \\ W_5 \\ W_6
          	\end{pmatrix}
          \end{multline}

          We take $\sigma=0.7$ and check the solution at $t=2$. The initial condition we use is $X_i(0)=1$ for all $1\le i \le 6$.
          The test function we use is 
          \begin{equation}
          	\phi(x)=\sum_{i=1}^6 x_i^2.
          \end{equation} 
          By It\^{o}'s formula
          \begin{equation}
          	\mathbb{E}\phi(X(T))=(\sum_{i=1}^6 X_i^2(0))\exp((-2+\sigma^2)t)+\dfrac{2.22\sigma^2}{\sigma^2-2}(\exp((-2+\sigma^2)t)-1).
          \end{equation}
          The relative error is again defined as
          \begin{equation}
          	E=\frac{1}{\mathbb{E}\phi(X(T))}\Big|\frac{1}{N}\phi(X^{(k), [T/h]})-\mathbb{E}\phi(X(T)) \Big|.
          \end{equation}
          
          For the following log-log error plot (Figure~\ref{fig:6D}), we choose $h=\dfrac{1}{4k}$, $1\le k \le 5$. The sample size is $N=2\times 10^8$ for $h\ge \dfrac{1}{16}$ and $5\times 10^8$ for $h=\dfrac{1}{20}$, chopped into $10$ equal slices to produce the error bars with confidence interval $[E-1.65\sigma_E, E+1.65\sigma_E]$ ($\sigma_E$ is again the standard deviation of these $10$ data). The plot demonstrates that the scheme works in high dimensions as well.
          \begin{figure}[htbp]
          	\begin{center}
          		\includegraphics[width=0.6\textwidth]{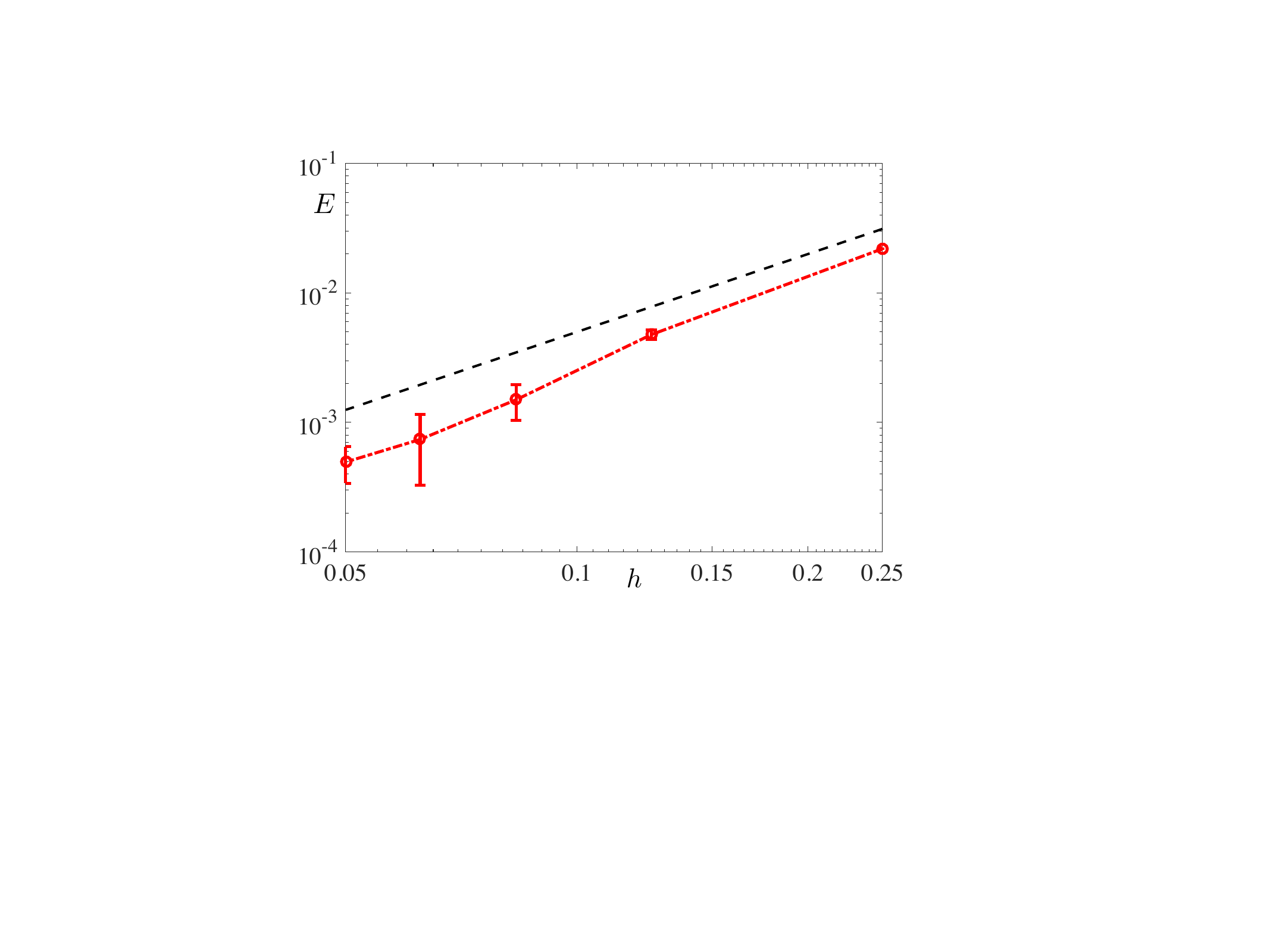}
          	\end{center}
          	\caption{Log-log error plot of Gaussian mixture method (red line). 
          		The black dashed line is $E=0.5h^2$ and the vertical segments indicate the ``error bars''. }
          	\label{fig:6D}
          \end{figure}

          \section*{Acknowledgements}
          
          The work of L. Li is partially sponsored by NSFC 11901389,11971314, and Shanghai Sailing Program 19YF1421300.
          The work of J.~Lu and L.~Wang is supported in part by National Science
          Foundation under grant DMS-1454939, while J. ~Mattingly is supported
          in part by National Science Foundation under grant DMS-1613337.

          \appendix
          
          \section{Proof of Proposition~\ref{pro:globalerror}}\label{app:globalerr}
          \begin{proof}
          	Let us fix $\phi\in C_b^{2(r+1)}$ and define 
          		\begin{align*}
          			& u^n(x)=\mathbb{E}_x\phi(X^n),\\
          			& u(x, t)=\mathbb{E}_x\phi(X(t)).
          		\end{align*}
          	By the Markov property, we have
          	\begin{equation}\label{eq:unsg}
          		u^{n+1}(x)=\mathbb{E}_x(\mathbb{E}_{x}(\phi(X^{n+1}) | X^1))=\mathbb{E}_x(u^n(X^1)).
          	\end{equation}
          	Similarly, we have
          	\begin{equation}\label{eq:utsg}
          		u(x, (n+1)h)=\mathbb{E}_x(u(X(t_1), nh)).
          	\end{equation}
          	Note that $u$ satisfies the backward Kolmogorov equation
          	\[
          	u_t=\cL u=b\cdot\nabla u+\frac{1}{2}\L_{ij}\partial_{ij}u,
          	\]
          	with initial condition
          	\[
          	u(x, 0)=\phi(x).
          	\]
          	By standard parabolic PDE theory, for $b, \sigma\in C_b^{2(r+1)}$, we have
          	\begin{equation}\label{eq:uniformbound}
          		\sup_{0\le t\le T}\|u\|_{C^{2(r+1)}}\le C(T).
          	\end{equation}
          	By Equations \eqref{eq:unsg} and \eqref{eq:utsg}, we have for all $x\in\mathbb{R}^d$ that
          	\begin{align*}
          	|u^{n+1}(x)-u(x, (n+1)h)| & \le |\mathbb{E}_x(u^n(X^1)-u(X^1, nh))| \\ & \qquad +|\mathbb{E}_x(u(X^1, nh))-\mathbb{E}_x(u(X(t_1), nh))|.
          	\end{align*}
          	Define
          	\[
          	E^n=\sup_{x\in \mathbb{R}^d}|u^n(x)-u(x, nh)|,
          	\]
          	by the assumption of Proposition~\ref{pro:globalerror} on local truncation error and Equation \eqref{eq:uniformbound} we have 
          	\begin{equation*}
          		E^{n+1}\le \mathbb{E}_xE^n+|\mathbb{E}_x(u(X^1, nh))-\mathbb{E}_x(u(X(t_1), nh))|
          		\le E^n+C h^{r+1},
          	\end{equation*}
          	where $C=\sup_{0\le t\le T}\rho(\|u(\cdot, t)\|_{C^{2(r+1)}})$. 	This further implies that
          	\[
          	\sup_{n: nh\le T}E^n \le C_1 h^r.
          	\]
          \end{proof}
          
          \section{Proof of Proposition \ref{pro:firstorderofonegauss}}\label{app:firstgaussian}
          
          \begin{proof}
          	For the convenience of notations, we will drop the dependence on $x_0$ so that $m(h)$ indeed means $m(h,x_0)$ and $m_1$ means $m_1(x_0)$ and so on. Denote $\cL_1:=(m(h)-x_0)\partial_x+\frac{1}{2}S(h)\partial_{xx}$, and we have
          	\[
          	\mathbb{E}_{x_0}(\phi(X^1))=\phi(x_0)+\cL_1\phi(x_0)+\frac{1}{2}\cL_1^2\phi(x_0)+O(h^3).
          	\]
          	It follows that
          	\begin{equation}\label{eq:numer1}
          		\mathbb{E}_{x_0}(\phi(X^1))=:\phi(x_0)+Bh+Ch^2+O(h^3).
          	\end{equation}
          	Here, $B$ and $C$ are the coefficients of $h$ and $h^2$:
          		\begin{align*}
          			& B=\phi'(x_0)m_1+\frac{1}{2}\phi''(x_0)S_0,\\
          			& C=\frac{1}{2}\big(\phi''(x_0)m_1^2+\phi'(x_0)m_2\big) 
          			+\frac{1}{2}\big(\frac{1}{2}\phi''(x_0)S_1+\phi'''(x_0)m_1S_0\big)+\frac{1}{8}\phi^{(4)}(x_0)S_0^2.
          		\end{align*}
          	To satisfy the condition \eqref{eq:basic2ndorder}, we need to have
          	\begin{equation}
          		B=\cL\phi(x_0), ~~C=\frac{1}{2}\cL^2\phi(x_0).
          	\end{equation}
          	Recall that $\cL=b\,\partial_x+\frac{1}{2}\L(x)\partial_x^2$, so
          	$B=\cL\phi(x_0)$ requires that for any sufficiently smooth $\phi$,
          	\begin{equation*}
          		b(x_0)\phi'(x_0)+\frac{1}{2}\L(x_0)\phi''(x_0)=\phi'(x_0)m_1+\frac{1}{2}\phi''(x_0)S_0,
          	\end{equation*}
          	which requires
          	\begin{equation}
          		m_1=b(x_0),~S_0=\L(x_0).
          	\end{equation}
          	On the other hand, the requirement $C=\frac{1}{2}\cL^2\phi(x_0)$ can be expanded as
          	\begin{multline*}
          		\frac{1}{2}\Big(b(x_0)(b\phi'+\frac{1}{2}\L \phi'')'+\frac{1}{2}\L(x_0)(b\phi'+\frac{1}{2}\L \phi'')''\Big)\Big|_{x=x_0} \\
          		=\Big(\frac{1}{2}(\phi''m_1^2+\phi'm_2)
          		+\frac{1}{2}(\frac{1}{2}\phi''S_1+\phi'''m_1S_0)+\frac{1}{8}\phi^{(4)}S_0^2 \Big)\Big|_{x=x_0}.
          	\end{multline*}
          	This is impossible in general. For example, the coefficient of $\phi'''$ on right hand side is  $\frac{1}{2}m_1S_0$, or $\frac{1}{2}b(x_0)\L(x_0)$ but the one on left hand side is $\frac{1}{2}b(x_0)\L(x_0)+\frac{1}{4}\L(x_0)\L'(x_0)$. They can not balance unless the diffusion matrix $\L(x)$ is constant.
          	
          \end{proof}
          
          \section{Proof of Lemma \ref{lmm:usefullmm}}\label{app:proofusefullmm}
          
          \begin{proof}         	
          	In this proof, we will again use $R$ to denote a generic function that can depend on the $C^6$ norm of the test function but can be bounded uniformly in $x_0$ and $h$. However, its concrete meaning can change from line to line.
          	
          	Clearly, due to the symmetry, we only need to prove that for all $\phi\in C_b^{\infty}$,
\begin{align}
          			\sum_{p\in P}w_p \phi(x_p)
          			& =\phi(x_0)+\sum_{i=1}^d w^1 D_{i}^2\phi(x_0) \gamma \lambda_i h
          			+\sum_{i<j} (w^1)^2 D_i^2D_j^2\phi(x_0) \gamma^2 \lambda_i\lambda_j h^2  \nonumber\\ & \qquad 
          			+\frac{1}{12}\sum_{i=1}^d w^1 D_{i}^4\phi(x_0) \gamma^2\lambda_i^2h^2
          			+R(x_0, h)h^3.\label{eq:target}
          	\end{align}
          	Without loss of generality, we set $x_0=0.$ With Equation \eqref{eq:weightsmd}, it is convenient to denote the left hand side of \eqref{eq:target} as
          	\begin{equation}
          		T_d(\phi)=\sum_{p\in P} (\prod_{i=1}^d w^{z_p^i})\phi\left(\sum_{i=1}^d z_p^i \sqrt{\gamma \lambda_i h} v_i\right).
          	\end{equation}
          	If $d=1$, the claim follows from 1D Taylor expansion derived in Section \ref{sec:gauss}.
          	Assume that the claim is valid for all $d=1,2,\ldots, m$, $m\ge 1$, and we want to prove for $d=m+1$. Define $P_m$ to be the index set with $d=m$. We find by definition:
          	\begin{align*}
          		T_{m+1}(\phi)&=w^1\sum_{p\in P_m}(\prod_{i=1}^m w^{z_p^i})\phi\left(\sum_{i=1}^m z_p^i \sqrt{\gamma \lambda_i h} v_i+\sqrt{\gamma \lambda_{m+1} h}\right) \\ & \qquad +
          		w^0\sum_{p\in P_m}(\prod_{i=1}^m w^{z_p^i})\phi(\sum_{i=1}^m z_p^i \sqrt{\gamma\lambda_i h} v_i)\\ & \qquad 
          		+w^{-1}\sum_{p\in P_m}(\prod_{i=1}^m w^{z_p^i})\phi(\sum_{i=1}^m z_p^i \sqrt{\gamma\lambda_i h} v_i-\sqrt{\gamma\lambda_{m+1} h}).
          	\end{align*}
          	For each $p$, we do Taylor expansion of $\phi$ about $\sum_{i=1}^m z_p^i \sqrt{\gamma \lambda_i h} v_i$ and have
          	\begin{equation}
          		T_{m+1}(\phi)=T_m(\phi)+w^1T_m(D_{m+1}^2\phi)\gamma \lambda_{m+1}h
          		+\frac{1}{12}w^1T_m(D_{m+1}^4\phi)\gamma^2\lambda_{m+1}^2h^2+Rh^3.
          	\end{equation}
          	By the induction hypothesis, we have
          	\begin{align*}
          		T_{m+1}(\phi)& =\phi(0)+\sum_{i=1}^m w^1 D_{i}^2\phi \gamma \lambda_i h 
          		+\sum_{i<j} (w^1)^2 D_i^2D_j^2 \gamma^2 \lambda_i\lambda_j h^2 
          		+\frac{1}{12}\sum_{i=1}^m w^1 D_{i}^4\phi \gamma^2\lambda_i^2h^2 \\ & \qquad 
          		+w^1\bigl(D_{m+1}^2\phi+\sum_{i=1}^m w^1D_i^2D_{m+1}^2\phi  \gamma\lambda_i h \bigr)\gamma \lambda_{m+1}h 
          		\\ & \qquad +\frac{1}{12}w^1D_{m+1}^4\phi\gamma^2\lambda_{m+1}^2h^2+R h^3.
          	\end{align*}
          	Arranging the terms on the right hand side, we find the claim is also true for $d=m+1$.
          \end{proof}

          \section{A variance construction approach}\label{app:varconstruct}
          
          \subsection{The variance construction method for one dimension.}
          Motivated by \eqref{eq:Sij} and \eqref{eq:theconstraints},
          we can construct
          	\begin{align*}
          		& S_{i}(h)=\frac{h}{2}\L(x_0+z_i\sqrt{6\L(x_0)h}+hb(x_0)),
          		~i=\pm 1, \\
          		& S_0(h)=\frac{1}{2}h\L(x_0+b(x_0)h)-\frac{3}{8}\L(x_0)\L''(x_0)h^2
          		+\frac{(3\L(x_0)\L''(x_0)/8)^2}{\L(x_0+b(x_0)h)/2} h^3.
          	\end{align*}
          We can verify that the constraints are all satisfied. The third term added is to ensure that $S_0$ is non-negative.
          Compared with the ODE flow method, the drawback of this method is that it involves higher order spatial derivatives, such as  $\L''$. In practice, one may approximate it by finite difference $\frac{1}{h^2} \bigl(\L(x_0+h)-2\L(x_0)+\L(x_0-h)\bigr)$.
          
          \begin{remark}
          	The third correction term can be thrown away if $h$ is small enough. For example, 
          	\[
          	h< \frac{4\inf_x|\L(x)|}{3\|\L \L'' \|_{\infty}} .
          	\]
          \end{remark}
          
          This construction gives the following Algorithm \ref{alg:1ddirectS} to generate $x_{n+1}$ given $X^n=x_n$.
          \begin{algorithm}
          	\caption{Gaussian mixture scheme for SDEs (variance construction method in 1D)\label{alg:1ddirectS}}
          	\begin{algorithmic}[1]
          		\State Generate $z$ such that $P(z=0)=\frac{2}{3}$
          		and $P(z=1)=P(z=-1)=\frac{1}{6}$. Then,
          		\begin{equation*}
          			m(0)=x_n+z\sqrt{\frac{3}{2} \L h}
          		\end{equation*}         		
          		If $z=0$, 
          		\begin{equation*}
          			S(h)=\frac{1}{2}h\L(x_n+b(x_n)h)-\frac{3}{8}\L(x_n)\L''(x_n)h^2
          			+\frac{(3\L(x_n)\L''(x_n)/8)^2}{\L(x_n+b(x_n)h)/2} h^3.
          		\end{equation*}
          		Otherwise,
          		\begin{equation*}
          			S(h)=\frac{h}{2}\L\big(x_n+z\sqrt{6\L(x_n)h}+hb(x_n)\big).
          		\end{equation*}
          		
          		\State Solve the ODE $\dot{m}=b(m)$ with the initial value $m(0)$ using a scheme of at least second order to obtain $m(h)$.
          		
          		\State
          		Sample 
          		\begin{equation}
          			x_{n+1}=m(h)+\sqrt{S(h)}\xi,
          		\end{equation}
          		where $\xi$ is a standard 1D normal variable.
          	\end{algorithmic}
          \end{algorithm}
          One can verify that the requirements in \eqref{eq:2ndCond2} are satisfied, which gives the following theorem:
          
          \smallskip
          
          \begin{theorem}
          	Let $d=1$. Suppose Assumptions~\ref{ass:pd}-\ref{ass:bounded} hold, then Algorithm~\ref{alg:1ddirectS} is a weak second-order scheme for the 1D diffusion process \eqref{eq:sde}.
          \end{theorem}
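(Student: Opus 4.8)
The plan is to verify that the variance construction in Algorithm~\ref{alg:1ddirectS} satisfies the six equations in \eqref{eq:2ndCond2} (equivalently, Condition~\ref{cond:expan}) together with positivity of $S_i(h)$, and then invoke Corollary~\ref{cor:second}. The symmetric labeling $i = 0, \pm 1$ with $w_1 = w_{-1} = \tfrac16$, $w_0 = \tfrac23$, $\gamma = \tfrac32$, and $m_i(0) = x_0 + z_i\sqrt{\tfrac32 \L(x_0) h}$ is exactly the one analyzed in Section~\ref{sec:gauss}, so Condition~\ref{cond:expan}(ii) holds by the same symmetry argument (odd powers of $h^{1/2}$ cancel in $\sum_i w_i \cL_i^m$ because $m_{\pm 1}$ and $S_{\pm 1}$ are chosen symmetrically and $m_0, S_0$ contain only integer powers of $h$). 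It remains to check the reduced constraints \eqref{eq:theconstraints}, i.e. that the chosen $S_i(h)$ reproduce $S_{i1} = g_i(x_0)$, $g_i(x_0) = \tfrac12 \L(x_0)$, $g_i'(x_0) = \L'(x_0)$, $g_1''(x_0) = \L''(x_0)$, with the imposed normalization $S_{01} = S_{11}$.

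First I would Taylor-expand each $S_i(h)$ in powers of $h^{1/2}$ about $x_0$. For $i = \pm 1$, writing $S_i(h) = \tfrac{h}{2}\L\bigl(x_0 + z_i\sqrt{6\L(x_0)h} + h b(x_0)\bigr)$ and expanding $\L$ around $x_0$: the argument displacement is $z_i\sqrt{6\L(x_0)}\,h^{1/2} + b(x_0) h$, so $\L(\cdot) = \L(x_0) + \L'(x_0)\bigl(z_i\sqrt{6\L(x_0)}\,h^{1/2} + \cdots\bigr) + \tfrac12\L''(x_0)\cdot 6\L(x_0) h + \cdots$; multiplying by $h/2$ gives $S_{i1} = \tfrac12\L(x_0)$, $S_{i2} = \tfrac12 z_i \L'(x_0)\sqrt{6\L(x_0)} = z_i \L'(x_0)\sqrt{\gamma\L(x_0)}$ (matching $S_{i2} = z_i g_i'\sqrt{\gamma\L}$ with $g_i' = \L'$), and at order $h^2$ one recovers $S_{i3} = \tfrac34 z_i^2 \L''\L + \tfrac12 \L' b$, matching \eqref{eq:Sij} with $g_i'' = \L''$ and using $\gamma\L = \tfrac32\L$, i.e. $z_i^2\gamma\L \cdot \tfrac12\L'' = \tfrac34 z_i^2 \L''\L$ after accounting for the $6\L$ factor. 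For $i = 0$, $S_0(h) = \tfrac12 h\L(x_0 + b(x_0)h) - \tfrac38\L(x_0)\L''(x_0)h^2 + O(h^3)$ expands as $S_{01} = \tfrac12\L(x_0) = S_{11}$, $S_{02} = 0$, and $S_{03} = \tfrac12 b\L' - \tfrac38\L\L''$; one checks that with $g_0 = \L - \tfrac12\L(x_0)$-type choice the $-\tfrac38\L\L''$ piece is precisely the correction needed so that \eqref{eq:2ndCond2} — in particular the fourth and sixth equations, which are where $S_{i3}$ and $S_{i1}^2$ enter — are satisfied. Substituting all these $S_{ij}$ and the already-known $m_{ij}$ (namely $m_{i0} = z_i\sqrt{\gamma\L}$, $m_{i1} = b$, $m_{i2} = z_i b'\sqrt{\gamma\L}$, $m_{i3} = \tfrac12 b'' z_i^2\gamma\L + \tfrac12 bb'$) into \eqref{eq:2ndCond2} and using $\sum_i w_i = 1$, $\sum_i w_i z_i^2 = 2w_1 = \tfrac13$, $\sum_i w_i z_i^4 = 2w_1 = \tfrac13$, the six equations reduce to algebraic identities; this is the routine but bookkeeping-heavy verification that "one can verify."

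Next I would establish positivity of $S_i(h)$ for small $h$. For $i = \pm 1$, $S_i(h) = \tfrac{h}{2}\L(\cdot) \ge \tfrac{h}{2}\sigma_0^2 > 0$ by Assumption~\ref{ass:pd}, for all $h$. For $i = 0$, the three-term expression is a quadratic in the correction: writing $a = \tfrac12\L(x_0 + b(x_0)h)h$, $c = \tfrac38\L(x_0)\L''(x_0)h^2$, we have $S_0(h) = a - c + c^2/a = (a - c/2)^2/a + (3/4)(c^2/a)\cdot\ldots$ — more simply, $S_0(h) = a\bigl(1 - (c/a) + (c/a)^2\bigr) = a\cdot\bigl((c/a - \tfrac12)^2 + \tfrac34\bigr) > 0$ since $a \ge \tfrac{h}{2}\sigma_0^2 > 0$. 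So $S_0(h) > 0$ unconditionally, and the numerical ODE for $m$ being at least second order preserves all the expansions up to $O(h^3)$. Having checked Condition~\ref{cond:expan} and positivity, Corollary~\ref{cor:second} immediately gives weak second order accuracy, exactly as in the proof of the Algorithm~\ref{alg:1dode} theorem.

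The main obstacle is the algebraic verification of the fourth and sixth equations of \eqref{eq:2ndCond2} for the $i = 0$ beam: these involve $S_{03}$ and $S_{i1}^2$, and the whole point of the $-\tfrac38\L(x_0)\L''(x_0)h^2$ term (and its $h^3$ companion) is to make $S_{03}$ come out right while keeping $S_0(h) > 0$. I would carefully isolate the $\phi^{(4)}$-coefficient (sixth equation) first — it only involves $w_i$, $m_{i0}$, $S_{i1}$, which are all leading-order and unaffected by the correction — to confirm $\tfrac18\sum w_i S_{i1}^2 + \tfrac14\sum w_i m_{i0}^2 S_{i1} + \tfrac1{24}\sum w_i m_{i0}^4 = \tfrac18\L^2$; then the fourth equation, tracking how $S_{03} = \tfrac12 b\L' - \tfrac38\L\L''$ combines with the $m_{i0}m_{i2}$ and $m_{i1}^2$ contributions to yield $\tfrac12 b^2 + \tfrac14 b\L' + \tfrac12\L b' + \tfrac18\L\L''$. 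Everything else is a transcription of the Section~\ref{sec:gauss} argument with the explicitly constructed $S_i$ in place of the ODE-generated one.
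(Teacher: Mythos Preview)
Your overall strategy --- extract the coefficients $m_{ij},S_{ij}$, feed them into the six equations \eqref{eq:2ndCond2}, check positivity of $S_i(h)$, and invoke Corollary~\ref{cor:second} --- is exactly what the paper intends (its ``proof'' is the single sentence ``One can verify that the requirements in Condition~\ref{cond:expan} are satisfied''). Your positivity argument for $S_0(h)$, rewriting it as $a\bigl((c/a-\tfrac12)^2+\tfrac34\bigr)$ with $a=\tfrac{h}{2}\L(x_0+b(x_0)h)\ge \tfrac{h}{2}\sigma_0^2$, is clean and correct.

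There is, however, a computational slip that would derail the verification as written. You try to match the expansion of $S_{\pm 1}(h)$ to the ODE template \eqref{eq:Sij} with $g_i''=\L''$, but the displacement in the variance construction is $z_i\sqrt{6\L(x_0)h}=2z_i\sqrt{\gamma\L(x_0)}\,h^{1/2}$, \emph{twice} the displacement $z_i\sqrt{\gamma\L}\,h^{1/2}$ used for $m_i(0)$. Hence the quadratic term contributes $\tfrac{h}{2}\cdot\tfrac12\L''\cdot 4z_i^2\gamma\L=\tfrac32 z_i^2\L\L''$, so
\[
S_{\pm 1,3}=\tfrac12\L' b+\tfrac32 z_i^2\L\L'',
\]
not $\tfrac34 z_i^2\L\L''+\tfrac12\L' b$. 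The variance construction therefore does \emph{not} reproduce \eqref{eq:Sij} with $g_i''=\L''$, and more generally you should not route through \eqref{eq:theconstraints} at all: those relations are specific to the ODE ansatz. What you must check is \eqref{eq:2ndCond2} directly. With the correct $S_{\pm 1,3}$ above and your (correct) $S_{03}=\tfrac12 b\L'-\tfrac38\L\L''$, the weighted sum is
\[
\sum_i w_i S_{i3}=\tfrac23\Bigl(\tfrac12\L' b-\tfrac38\L\L''\Bigr)+\tfrac13\Bigl(\tfrac12\L' b+\tfrac32\L\L''\Bigr)=\tfrac12\L' b+\tfrac14\L\L'',
\]
which is precisely what the fourth equation of \eqref{eq:2ndCond2} requires. (With your erroneous $S_{\pm 1,3}$ one would instead get $\tfrac12\L' b$, and the equation would fail.) The remaining equations involve only $S_{i1}=\tfrac12\L$ and $S_{i2}=z_i\L'\sqrt{\gamma\L}$, which you computed correctly, so once this slip is fixed the rest of your plan goes through unchanged.
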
The proof is identical to that of Theorem \ref{thm:1dodeflow} and is omitted here.

\subsection{The variance construction method for multi-dimension.}
          
          As before, one may want to guarantee that $S_p(h)$ is positive definite for $p\in P$.  We now present a variance construction method for $S_p(h)$ for multi-dimension. Consider that $m_p(h)$ and $S_p(h)$ are given by 
          \begin{equation*}
          	\begin{aligned}
          		& \dot{m}_p=b(m_p(t)), ~ m_p(0)=y_p,\\
          		& S_p(h)=\frac{h}{2}\L\Big(x_0+\sum_{i=1}^d z_p^i\sqrt{6\lambda_i h}v_i+hb\Big)-\frac{3h^2}{8}\sum_{i=1}^d(1-|z_p^i|)\lambda_iD_{i}^2\L +F_p(h)h^3,
          	\end{aligned}
          \end{equation*}
          where $\sum_{i=1}^d(1-|z_p^i|)\lambda_iD_{i}^2\L$ can be approximated by finite difference. In particular, if we set $\theta=\sum_{i=1}^d\sqrt{(1-|z_p^i|)\lambda_i}v_i$, then
          \[
          \sum_{i=1}^d(1-|z_p^i|)\lambda_iD_{i}^2\L \approx \frac{1}{h^2}\Big(\L(x_0+h\theta)-2\L(x_0)+\L(x_0-h\theta)\Big).
          \]
          $F_p(h)h^3$ is added to ensure
          that $S_p$ is positive semi-definite. Let the first two terms in $S_p$
          be $hA_p$ and $h^2B_p$, where $A_p$ is positive definite and thus invertible. Then, we have
          \begin{equation}\label{eq:F}
          	F_p=\frac{1}{4}B_pA_p^{-1}B_p.
          \end{equation}
          
          We propose Algorithm~\ref{alg:multddirectS} to generate $x_{n+1}$ given $X^n=x_n$.
          
          \begin{algorithm}[htb]
          	\caption{Gaussian mixture scheme for SDEs (variance construction method in higher D)\label{alg:multddirectS}}
          	\begin{algorithmic}[1]
          		\State Using some fast algorithm, decompose 
          		\[
          		\L(x_n)=\sum_{i=1}^d \lambda_i(x_n) v_i(x_n)v_i^T(x_n).
          		\]
          		
          		\State Generate $z^i, i=1,2,\ldots, d$ so that $P(z^i=0)=\frac{2}{3}$ while $P(z^i=1)=P(z^i=-1)=\frac{1}{6}$.
          		
          		\State Construct the matrix
          		\begin{equation}
          			S(h)=\frac{h}{2}\L(x_n+\sum_{i=1}^d z^i \sqrt{6 \lambda_i h} v_i)-\frac{3h^2}{8}\sum_{i=1}^d (1-|z^i|)\lambda_iD_{i}^2\L+F(h)h^3
          		\end{equation}
          		where $F$ is constructed according to \eqref{eq:F}.
          		
          		\State
          		Let
          		\begin{equation*}
          			m(0)=x_n+\sum_{i=1}^d z^i \sqrt{\frac{3}{2} \lambda_i h} v_i.
          		\end{equation*}
          		and then 
          		$m(h)$ is obtained by solving $\dot{m}=b(m)$ using an ODE solver with at least second-order accuracy (e.g. RK2, RK4).
          		
          		\State Sample $x_{n+1}\sim \mathcal{N}(m(h), S(h))$, or 
          		\[
          		x_{n+1}=m(h)+\sum_{i=1}^d \sqrt{\mu_i^+}\xi_i u_i .
          		\]
          		where $S(h)=\sum_{i=1}^d \mu_i u_i u_i^T$ with $\{u_i\}_{i=1}^d$ being orthonormal, and $\{\xi_i\}$ are i.i.d standard 1D normal variables.
          	\end{algorithmic}
          \end{algorithm}
      
         \smallskip
          
          \begin{remark}
          	Notice that we need to invert a matrix to get $F(h)$, which is not desirable when $d$ is large. However, similar as the 1D case, if $h$ is small enough, $F(h)h^3$ can be thrown away and we can still guarantee the positive definiteness.
          \end{remark}
      
      \medskip
          
          \begin{theorem}
          	Suppose Assumptions~\ref{ass:pd}-\ref{ass:bounded} hold, then Algorithm~\ref{alg:multddirectS} is a second-order scheme for the multi-dimensional diffusion process \eqref{eq:sde}.
          \end{theorem}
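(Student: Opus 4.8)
The plan is to imitate the proof of Theorem~\ref{thm:odemultid} essentially line by line, replacing the ODE-generated covariances by the explicitly constructed ones of Algorithm~\ref{alg:multddirectS} (the centers $y_p$, the weights $w_p$, and the $m_p(h)$ obtained from $\dot m_p=b(m_p)$ are exactly as before), and then to invoke Corollary~\ref{cor:second}. Two points must be established: (i) for $h$ below a threshold that is uniform in the current position $x_0$, every $S_p(h)$ is positive definite, so that \eqref{eq:multdnormalass} is a genuine probability measure and the eigenvalue-truncation step in Algorithm~\ref{alg:multddirectS} never triggers; (ii) the local weak error is $O(h^3)$.

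For (i), I would use the algebraic identity that motivated the choice \eqref{eq:F}. Write the first two summands of $S_p(h)$ as $hA_p$ and $h^2B_p$, where $A_p\succeq\tfrac12\sigma_0^2 I$ by Assumption~\ref{ass:pd} (hence invertible) and $B_p$ is uniformly bounded by Assumption~\ref{ass:bounded}, and take $F_p=\tfrac14 B_pA_p^{-1}B_p$. Then, with $C_p:=A_p^{-1/2}B_pA_p^{-1/2}$,
\[
hA_p+h^2B_p+h^3F_p=A_p^{1/2}\Bigl(hI+h^2C_p+\tfrac{h^3}{4}C_p^2\Bigr)A_p^{1/2}=h\,A_p^{1/2}\bigl(I+\tfrac h2 C_p\bigr)^2A_p^{1/2}\succeq 0 .
\]
Since $\|C_p\|$ is bounded uniformly in $p$ and $x_0$, $I+\tfrac h2 C_p$ is invertible for all small $h$, whence $S_p(h)\succ 0$. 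If instead one uses the stated finite-difference surrogate for $D_i^2\L$, the resulting perturbation of $S_p(h)$ is $O(h^4)$, and if in addition one drops $F_ph^3$ as in the remark the perturbation is $O(h^3)$; in either case $hA_p\succeq\tfrac12\sigma_0^2 h I$ keeps $S_p(h)$ positive definite once $h$ is small.

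For (ii), I would expand $\mathbb{E}_{x_0}\phi(X^1)=\sum_{p=1}^{3^d}w_p\,\mathbb{E}\bigl[\phi\bigl(\mathcal N(m_p(h),S_p(h))\bigr)\bigr]$ exactly as in the proof of Theorem~\ref{thm:odemultid}: Taylor-expand the Gaussian expectation in powers of $S_p(h)$ (keeping up to fourth derivatives of $\phi$, because $m_p(h)-x_0=O(\sqrt h)$), Taylor-expand $m_p(h)$ from $\dot m_p=b(m_p)$ with $m_p(0)=y_p$, expand all coefficients about $y_p$ and then about $x_0$, and sum over the $3^d$ beams with Lemma~\ref{lmm:usefullmm}. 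The only genuinely new ingredient is verifying that the explicit $S_p(h)$ has the asymptotic expansion \eqref{eq:Sexpand} with coefficients obeying the constraints \eqref{eq:2ndCond2} — equivalently, the coefficients $S_{ij}$ of \eqref{eq:Sij} subject to \eqref{eq:theconstraints} — i.e.\ that Condition~\ref{cond:expan} holds: expanding $\tfrac h2\L\bigl(x_0+\sum_i z_p^i\sqrt{6\lambda_i h}\,v_i+hb\bigr)$ in powers of $h^{1/2}$ reproduces $S_{p,1}=\tfrac12\L(x_0)$ together with the correct first- and second-$v_i$-derivative contributions, while the term $-\tfrac{3h^2}{8}\sum_i(1-|z_p^i|)\lambda_i D_i^2\L$ adjusts the diagonal second-derivative part in the directions with $z_p^i=0$ so that the equation in \eqref{eq:2ndCond2} carrying $\tfrac18\L\L''$ holds, and $F_ph^3$ together with the finite-difference error (of orders $h^3$ and $h^4$ respectively in $S_p(h)$) fall into the $O(h^3)$ remainder. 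The upshot is
\[
\Bigl|\mathbb{E}_{x_0}\phi(X^1)-\bigl(\phi(x_0)+h\cL\phi(x_0)+\tfrac{h^2}{2}\cL^2\phi(x_0)\bigr)\Bigr|\le\rho\bigl(\|\phi\|_{C^6}\bigr)h^3 ,
\]
so that Corollary~\ref{cor:second} gives weak second-order accuracy.

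The main obstacle is the bookkeeping in step (ii): carrying the $h^{1/2}$-expansions of $S_p(h)$ and $m_p(h)$ through the fourth-order Taylor remainder of the Gaussian expectation and through the multi-index sums in Lemma~\ref{lmm:usefullmm}, and confirming that all cross terms assemble precisely into $h\cL\phi(x_0)+\tfrac12 h^2\cL^2\phi(x_0)$. This is the same lengthy but routine computation already performed for Theorem~\ref{thm:odemultid}; the one conceptual check specific to Algorithm~\ref{alg:multddirectS} is that its explicit $S_p(h)$, including the $D_i^2\L$ and $F_p$ corrections, reproduces exactly the coefficients dictated by \eqref{eq:Sij}--\eqref{eq:theconstraints}, which is precisely where the design of the algorithm resides.
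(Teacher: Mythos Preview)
Your proposal is correct, but the paper takes a genuinely different route for part (ii). Rather than re-running the Taylor-expansion computation of Theorem~\ref{thm:odemultid} with the new covariances, the paper \emph{compares} Algorithm~\ref{alg:multddirectS} to Algorithm~\ref{alg:multdode}, whose second-order accuracy is already established. Since the two schemes share the same beams, weights, and centers $m_p(h)$, the paper writes
\[
E=\mathbb E_{x_0}^{s}\phi(X^1)-\mathbb E_{x_0}^{o}\phi(X^1),
\]
expands $S_p^o(h)-S_p^s(h)$ to see it is $O(h^2)$ beam-by-beam, and then checks that
$\sum_p w_p\,\partial_{ij}\phi(y_p)\bigl(S_{p,ij}^o-S_{p,ij}^s\bigr)$ vanishes at order $h^2$ using only the elementary identities $\sum_p w_p|z_p^i|=\tfrac13$ and $\sum_p w_p z_p^m z_p^n=\tfrac13\delta_{mn}$. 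Symmetry then kills the $h^{5/2}$ contribution, giving $|E|\le Ch^3$, and Theorem~\ref{thm:odemultid} does the rest. This comparison argument is considerably shorter because it sidesteps the full Lemma~\ref{lmm:usefullmm} bookkeeping you would have to repeat. Your direct approach, on the other hand, is more self-contained (it does not rely on having first proved Theorem~\ref{thm:odemultid}), and your completed-square identity $S_p(h)=h\,A_p^{1/2}\bigl(I+\tfrac h2 C_p\bigr)^2 A_p^{1/2}$ for part (i) is a cleaner justification of positive-definiteness than anything the paper spells out. One caution: the references to \eqref{eq:Sij}--\eqref{eq:theconstraints} and to the single equation in \eqref{eq:2ndCond2} carrying $\tfrac18\L\L''$ are one-dimensional; in your write-up you will need their multi-index analogues, which is exactly the ``lengthy but routine'' bookkeeping you flag.
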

      
      \smallskip
          
          \begin{proof}
          	Again, the idea is to check the conditions in Corollary~\ref{cor:second}. Our strategy is not to verify the conditions directly, instead, we compare it to Algorithm~\ref{alg:multdode}, the one using an ODE approach.
          	
          	Again, we only have to check $X^1$ given $X^0=x_0$. Let $S_p^o$ be the covariance matrix obtained following Algorithm~\ref{alg:multdode} at time $h$ while $S_p^s$ be the covariance matrix constructed in this section at time $h$ for $p\in P$. Let $\mathbb{E}_{x_0}^s$ denotes the expectation under the process constructed here while $\mathbb{E}_{x_0}^o$ be the expectation under the process in Algorithm~\ref{alg:multdode}.
          	
          	Consider
          	\[
          	E=\mathbb{E}_{x_0}^s(\phi(X^{1}))
          	-\mathbb{E}_{x_0}^o(\phi(X^{1}))=:E_1-E_2.
          	\]
          	Since the two algorithms only give different covariance matrices, we have by Equation \eqref{eq:multdnormalass}:
          	\begin{multline}\label{eq:errortwoalg}
          		|E|  \le \Big|\frac{1}{2}\sum_{p\in P} w_p \partial_i\partial_j\phi(m_p(h))(S_{p,ij}^o-S_{p,ij}^s) \\
          		+\frac{1}{8}\sum_{p\in P} w_p \partial_{ijkl}\phi(m_p(h))(S_{p,ij}^oS_{p,kl}^o -S_{p,ij}^sS_{p,kl}^s)\Big|+R(h)h^3.
          	\end{multline}
          	We denote 
          	\begin{equation}\label{eq:defeta}
          		\eta_p=(m_p(0)-x_0)/\sqrt{h}=\sum_{i=1}^d z_p^i \sqrt{\frac{3}{2}\lambda_i }v_i.
          	\end{equation}
          	By Equation \eqref{eq:mdodesys} and direction Taylor expansion on $t$, we find for $p\in P$,
          	\begin{multline*}
          		S_p^o=G(m_p(0))h+\frac{1}{2}b(m_p(0))\cdot\nabla G(m_p(0))h^2
          		+R(h)h^3\\
          		=\frac{1}{2}h\L+\eta_p\cdot\nabla G h^{3/2}
          		+\frac{1}{2} (\eta_p)_i(\eta_p)_j\partial_{ij}G h^2
          		+\frac{1}{2}b\cdot\nabla G h^2+K_p^1(h)h^{5/2}+R(h)h^3,
          	\end{multline*}
          	where $K_p^1(h)$ is a bounded function.
          	
          	We do expansion on $S_p^s$ and have
          	\begin{align*}
          		S_p^s & =\frac{1}{2}h\L+\eta_p\cdot\nabla G h^{3/2}
          		+\frac{1}{2}b\cdot\nabla \L h^2
          		+(\eta_p)_i(\eta_p)_j\partial_{ij}\L h^2
          		-\frac{3h^2}{8}\sum_{i=1}^d(1-|z_p^i|)\lambda_i D_i^2\L \nonumber \\ & \qquad +K_p^2(h)h^{5/2}+R(h)h^3,
          	\end{align*}
          	where $K_p^2$ is some bounded function. This implies
          	\begin{equation}\label{eq:diffcovar}
          		S_{p,ij}^o-S_{p,ij}^s=\frac{3h^2}{8}\sum_{i=1}^d(1-|z_p^i|)\lambda_i D_i^2\L-\frac{1}{2} (\eta_p)_i(\eta_p)_j\partial_{ij}\L h^2+K_p^3h^{5/2}+R(h)h^3.
          	\end{equation}
          	Hence, we can replace $m_p(h)$ with $y_p$ and throw away
          	the terms involving $\partial_{ijkl}\phi$ in \eqref{eq:errortwoalg} with introducing errors at most $R(h)h^3$:
          	\[
          	|E|\le 
          	\Big|\frac{1}{2}\sum_{p\in P} w_p \partial_i\partial_j\phi(y_p)(S_{p,ij}^o-S_{p,ij}^s)\Big|+R(h)h^3.
          	\]
          	By \eqref{eq:diffcovar} and \eqref{eq:defeta}, we find
          	\begin{align*}
          		& \sum_{p\in P}  w_p \partial_i\partial_j\phi(y_p)(S_{p,ij}^o-S_{p,ij}^s) \nonumber \\
          		&  =\frac{h^2}{2}\partial_i\partial_j\phi(x_0)
          		\sum_{p\in P} w_p \big(\frac{3}{8}\sum_{i=1}^d(1-|z_p^i|)\lambda_i D_i^2\L-\frac{3}{4}\sum_{m,n}z_p^mz_p^n \sqrt{\lambda_m\lambda_n} D_mD_n\L \big)  +R(h)h^{5/2}.
          	\end{align*}
We note first that
\begin{equation}\label{eq:auxiliaryidentityapp}
\sum_{p\in P} w_p (1-|z_p^i|)=w^0=\frac{2}{3},\,  ~~\sum_{p\in P} w_p z_p^m z_p^n= 2w^1 \delta_{mn}=\frac{1}{3}\delta_{mn}.
\end{equation}
We justify the second equality for an example. 
Let $j\in\{\pm 1, 0\}$ be the index over the beams in one dimension,
and $z_{j}=j$. Then, when $m=n$,
\[
\sum_{p\in P} w_p z_p^m z_p^n=(\sum_{j=-1}^1 |z_j|^2 w^j)\prod_{i\neq m}(\sum_{j=-1}^1 w^j)=\sum_{j=-1}^1 |z_j|^2 w^j=2w^1.
\]
When $m\neq n$, 
\[
\sum_{p\in P} w_p z_p^m z_p^n=(\sum_{j=-1}^1 z_j w^j)^2\prod_{i\neq m, i\neq n}(\sum_{j=-1}^1 w^j)=(\sum_{j=-1}^1 z_j w^j)^2=0.
\]
Using \eqref{eq:auxiliaryidentityapp}, we find that
          	\[
          	\sum_{p\in P} w_p \partial_i\partial_j\phi(y_p)(S_{p,ij}^o-S_{p,ij}^s)
          	=R(h)h^{5/2}.
          	\]
          	Therefore
          	\[
          	|E|\le R(h)h^{5/2}.
          	\]
          	However, we know that $E_1$ does not contain $h^{5/2}$ terms
          	while $E_2$ neither does because of the symmetry. Hence, $|E|\le R(h)h^3$, which finishes the proof.
          \end{proof}

         \bibliographystyle{siam}
         \bibliography{GaussianMixture.bib}

          \end{document}